\numberwithin{equation}{section}
\newtheorem*{theorem*}{Theorem}
\newtheorem{theorem}{Theorem}[section]
\newtheorem{lemma}[theorem]{Lemma}
\newtheorem{proposition}[theorem]{Proposition}
\newtheorem{corollary}[theorem]{Corollary}
\newtheorem{thmx}{Theorem}
\newenvironment{customthm}[1]
  {\innercustomthm}
  {\endinnercustomthm}
\theoremstyle{definition}
\newtheorem{definition}[theorem]{Definition}
\theoremstyle{remark}
\newtheorem*{remark}{Remark}
\newtheorem{claim}{Claim}
\newtheorem*{ack}{Acknowledgments}
\newtheorem*{notation}{Notation}
\DeclareMathOperator{\Eff}{Eff}
\DeclareMathOperator{\Supp}{Supp}
\DeclareMathOperator{\Hom}{Hom}
\DeclareMathOperator{\N}{N}
\DeclareMathOperator{\NE}{NE}
\DeclareMathOperator{\Bl}{Bl}
\begin{document}
\title{Numerical dimension and locally ample curves}
\author{Chung-Ching Lau}
\date{}
\address{Department of Mathematics, University of Utah, Salt Lake City, UT 84112, USA}
\thanks{Research partially suppported by NSF FRG grant DMS-1265285}
\email{{\tt lau@math.utah.edu}}
\subjclass[2010]{Primary 14C17; Secondary 14C20}
\keywords{Ample subschemes, locally ample subschemes, intersection theory, movable cone, partially positive line bundles}

\maketitle

\begin{abstract} 
In the paper \cite{Lau16},
it was shown that the restriction of a pseudoeffective divisor $D$ to a so-called nef subvariety $Y$ (e.g. $Y$ is lci in $X$ and has nef normal bundle) is pseudoeffective.
Assuming the normal bundle is ample and that $D|_Y$ is not big,
we prove that the numerical dimension of $D$ is bounded above by that of its restriction, i.e. $\kappa_{\sigma}(D)\leq \kappa_{\sigma}(D|_Y)$.
The main motivation is to study the cycle classes of ``positive'' curves:
we show that
the cycle class of a curve with ample normal bundle lies in the interior of the cone of curves,
and the cycle class of an ample curve lies in the interior of the cone of movable curves. 
We do not impose any condition on the singularities on the curve or the ambient variety.
For locally complete intersection curves in a smooth projective variety,
this is the main result of Ottem \cite{Ott16}.
The main tool in this paper is the theory of $q$-ample divisors.
\end{abstract}

\section{Introduction}
This paper deals with subvarieties (of projective variety) which manifest positivity property.
Recall that a divisor $D$ is $q$-\textit{ample} if
for any $\mathscr{F}$ there is an $m_0$ such that
\[
H^{i}(X,\mathscr{F}\otimes\mathscr{O}_X(mD))=0
\,\,\,\, \forall m\geq m_0.
\]

Let $X$ be a projective variety, let $Y$ be a subvariety of $X$ of codimension $r$ and let $\tilde{X}\rightarrow X$ be the blowup morphism of $X$ along $Y$, with exceptional divisor $E$.
We call $Y$ a \textit{locally ample} subvariety of $X$ if $\mathscr{O}_{E}(E)$ is $(r-1)$-ample.
If $Y$ is lci in $X$, being locally ample is equivalent to having ample normal bundle.
We call $Y$ an \textit{ample} subvariety of $X$ if $\mathscr{O}_{\tilde{X}}(E)$ is $(r-1)$-ample (The notion of an ample subvariety was introduced in \cite{Ott12}).
We call $Y$ a \textit{nef} subvariety of $X$ if $\mathscr{O}_{E}(mE+A)$ is $(r-1)$-ample for $m\gg 0$, where $A$ is an ample divisor.
If $Y$ is l.c.i. in $X$, being nef is the same as having nef normal bundle.

In \cite{Lau16}, we showed that the restriction of a pseudoeffective divisor to a nef subvariety is pseudoeffective.
In this paper, we shall study how the numerical dimension of the classes on the boundary of $\overline{\Eff}^{1}(X)$ behave under the restriction $\iota^{*}: \overline{\Eff}^{1}(X)\rightarrow \overline{\Eff}^{1}(Y)$, assuming $Y$ is locally ample.

Nakayama showed that if $H$ is a smooth ample divisor of a smooth projective variety $X$ and $\eta\in\N^{1}(X)_{\mathbf{R}}$ is not big,
then $\kappa_{\sigma}(\eta)\leq \kappa_{\sigma}(\eta|_{H})$ \cite[Proposition 2.7(5)]{Nak04}.
On the other hand,
Ottem showed that if $X$ is a smooth projective variety,
$Y$ is a l.c.i. subvariety with ample normal bundle and $\eta\in\N^{1}(X)_{\mathbf{R}}$ satisfies $\eta|_{Y}=0$,
then $\kappa_{\sigma}(\eta)=0$ \cite[Theorem 1]{Ott16}.
This was a conjecture due to Peternell \cite[Conjecture 4.12]{Pet12}.
The following theorem generalizes both of the above results.
\begin{thmx}\label{thm: 3}
Let $\iota: Y\hookrightarrow X$ be a locally ample subvariety of codimension $r$ of a projective variety $X$.
If $\eta\in\N^{1}(X)_{\mathbf{R}}$ is a pseudoeffective class such that $\eta|_Y$ is not big, then
$\kappa_\sigma(\eta)\leq \kappa_\sigma(\eta|_Y)$.
\end{thmx}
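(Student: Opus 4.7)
The plan is to pass to the blowup $\pi \colon \tilde{X} \to X$ along $Y$, with exceptional divisor $E$, and to exploit the $(r-1)$-ampleness of $\mathcal{O}_E(E)$ guaranteed by the locally ample hypothesis. Since $\pi$ is birational, $\kappa_\sigma(\eta) = \kappa_\sigma(\pi^*\eta)$, so for a fixed ample divisor $L$ on $X$ it suffices to bound $h^0(\tilde{X}, \pi^*(m\eta+L)) = h^0(X, m\eta+L)$. Writing $k = \kappa_\sigma(\eta|_Y)$, the non-bigness assumption gives $h^0(Y, m\eta|_Y + L|_Y) = O(m^k)$, and the goal is to promote this to $h^0(X, m\eta+L) = O(m^k)$.

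The main technical device will be the filtration $F^j := H^0(\tilde{X}, \pi^*(m\eta+L) - jE)$ together with the short exact sequences
\[
0 \to \mathcal{O}_{\tilde{X}}\bigl(\pi^*(m\eta+L) - (j+1)E\bigr) \to \mathcal{O}_{\tilde{X}}\bigl(\pi^*(m\eta+L) - jE\bigr) \to \mathcal{O}_E\bigl((\pi^*(m\eta+L) - jE)|_E\bigr) \to 0.
\]
Each graded piece $F^j/F^{j+1}$ injects into $H^0\bigl(E, (\pi|_E)^*(m\eta|_Y + L|_Y) - j\,\mathcal{O}_E(E)\bigr)$. Telescoping and then pushing forward along $\pi|_E \colon E \to Y$, the hypothesis that $\mathcal{O}_E(E)$ is $(r-1)$-ample allows me to reduce these cohomologies on $E$ to cohomologies on $Y$ via the Leray spectral sequence, where the auxiliary pushforwards $(\pi|_E)_* \mathcal{O}_E(-jE)$ play the role of $\Sym^j N_{Y/X}^*$ (in the l.c.i.\ case these coincide exactly), and the horizontal cohomology is controlled by $h^0(Y, m\eta|_Y + L|_Y)$.

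The principal obstacle is a counting issue: $F^j$ will in general vanish only for $j$ linear in $m$, so a naive summation loses a factor of $m$ and produces only the weaker bound $\kappa_\sigma(\eta) \leq k+1$. To remove this excess factor, I would combine the non-bigness of $\eta|_Y$ — which forces the horizontal contribution on $Y$ to be $O(m^k)$ — with the $(r-1)$-ampleness of $\mathcal{O}_E(E)$, which, via Totaro-type uniform Serre vanishing, controls how fast the vertical contribution $-j\,\mathcal{O}_E(E)$ damps the cohomology as $j$ grows. The careful bookkeeping needed to absorb the $j$-sum into a single $O(m^k)$ bound is where the bulk of the argument must lie; the pseudoeffectivity of $\eta|_Y$ proven in \cite{Lau16} is implicitly used to ensure that the starting estimate on $Y$ is meaningful.
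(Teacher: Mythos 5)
Your setup (pass to the blowup, filter $H^0$ by order of vanishing along $E$, inject the graded pieces into cohomology on $E$, push down to $Y$ by Leray) matches the general shape of the paper's argument, and you correctly isolate the central danger: the filtration a priori has length linear in $m$, so naive summation only yields $\kappa_\sigma(\eta)\le \kappa_\sigma(\eta|_Y)+1$. But the proposal stops exactly where the real content begins. You propose to remove the excess factor by letting the $(r-1)$-ampleness of $\mathscr{O}_E(E)$ ``damp'' the vertical contribution as $j$ grows, with the bookkeeping left unspecified. That heuristic is not right as stated and no bookkeeping along those lines will close the gap: $\mathscr{O}_E(-E)$ is $\pi|_E$-ample, so twisting by $-jE$ tends to \emph{increase} $h^0$ on $E$ rather than damp it, and $(r-1)$-ampleness is a statement about vanishing of $H^{>r-1}$ of large \emph{positive} twists $\mathscr{O}_E(mE)$, which says nothing directly about the $H^0$'s in your graded pieces.

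The missing idea is the paper's Proposition \ref{prop:num dom} (phrased via Nakayama's numerical dominance): the non-bigness of $\eta|_Y$ must be used not merely for the horizontal estimate $h^0(Y,m\eta|_Y+L|_Y)=O(m^k)$, but as \emph{partial negativity}. By Theorem \ref{theorem: n-1 ample}, $D|_Y$ not big means $-D|_Y$ is $(n-r-1)$-almost ample; adding this to the $(r-1)$-ample class $E|_E$ (subadditivity of $q$-ampleness, Theorem \ref{thm:open}) makes $(kE-\lfloor tD\rfloor+\cdots)|_E$ $(n-2)$-ample for all $k\ge k_0$ and all $t\ge 0$, with $k_0$ \emph{independent of $t$}. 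Via Serre duality and the ideal-sheaf sequence this yields $h^{n}(\tilde X,\mathscr{O}_{\tilde X}(kE-\sum\lfloor ta_i\rfloor\pi^*C_i-B))=0$ for $k\ge k_0$, which dualizes to the statement that sections of $tD$ plus a fixed ample vanish along $E$ to order at most $k_0$, uniformly in $t$. That uniform truncation is what reduces your $j$-sum to a bounded number of nonzero terms, each $O(m^k)$ after the Leray/pushforward step, and only then does the bound $\kappa_\sigma(\eta)\le k$ follow. Without this step the proof is incomplete.
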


From this, we deduce the following result (see Theorem \ref{thm: C}). 
\begin{thmx}
Let $Y$ be a locally ample subvariety of $X$ and let $f:X\rightarrow Z$ be a surjective morphism from $X$ to a projective variety $Z$.
If $\dim f(Y)<\dim Y$, then $f|_Y: Y\rightarrow Z$ is also surjective, i.e. $f(Y)=Z$.
\end{thmx}
One can regard these results as hints that it is natural to study the notion of locally ample subvariety.

We now turn our focus to the main application of Theorem \ref{thm: 3}.

It seems interesting to ask how the positivity of the normal bundle of a subvariety influences the positivity of the underlying cycle class of the subvariety.
The divisor case is well-known.
For example, ample divisors generate an open cone in $\N^1(X)_{\mathbb{R}}$, called the ample cone. 
The closure of the ample cone is dual to the closure of the cone generated by curves in $X$ (Kleiman).
Furthermore, an effective Cartier divisor with ample normal bundle is big \cite[Theorem III.4.2]{Har70}.
In this paper, we want to see whether similar properties hold for curves.
Boucksom, Demailly, P\u{a}un and Peternell \cite{BDPP} showed that the closure of the cone of effective divisors in $\N^1(X)_{\mathbb{R}}$, called the pseudoeffective cone, is dual to the closure of the cone generated by strongly movable curves, called the movable cone of curves.
Using this result, one can show that the cycle class of a nef curve (in particular a curve with nef normal bundle) lies in the movable cone of curves (\cite[Theorem 4.1]{DPS}, \cite[Theorem 1.3]{Lau16}).
By analogy to the divisor case, it is natural to pose the following question: given a locally ample (resp. ample) curve, does the cycle class of the curve lies in the \textit{interior} of the cone of curves (resp. movable cone of curves)?
In this paper, we give a positive answer to this question.

\begin{thmx}\label{thm: thmB}
Let $X$ be a projective variety and let $Y$ be a locally ample curve in $X$. Then $[Y]\in\N_{1}(X)_{\mathbf{R}}$ is big, i.e. it lies in the interior of cone of curves.
Furthermore, if $Y$ meets all prime divisors of $X$, e.g. $Y$ is ample, then $[Y]$ lies in the interior of the movable cone of curves.
\end{thmx}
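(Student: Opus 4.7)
The plan is to pass to divisor classes via duality and then invoke theorem~\ref{thm: 3} together with the positivity constraints of \cite{Lau16}. The cone of curves $\overline{\NE}(X)$ is dual to the nef cone $\Nef^{1}(X)$, while by the theorem of Boucksom--Demailly--P\u{a}un--Peternell the movable cone $\overline{\Mov}_{1}(X)$ is dual to the pseudoeffective cone $\overline{\Eff}^{1}(X)$. Thus the first assertion amounts to showing $D\cdot Y>0$ for every nonzero nef class $D\in\N^{1}(X)_{\mathbf{R}}$, and the second to showing the same for every nonzero pseudoeffective class $D$.

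In each case I would argue by contradiction. Assume $D\cdot Y\leq 0$. Since $Y$ is locally ample, it is in particular a nef subvariety, so by \cite{Lau16} the restriction $D|_{Y}$ is pseudoeffective on the curve $Y$. This forces $D\cdot Y=\deg(D|_{Y})=0$, so $D|_{Y}$ is numerically trivial and in particular not big. Theorem~\ref{thm: 3} then yields $\kappa_{\sigma}(D)\leq\kappa_{\sigma}(D|_{Y})=0$, and therefore $\kappa_{\sigma}(D)=0$.

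For the first assertion this already suffices: for nef $D$, $\kappa_{\sigma}(D)$ agrees with the numerical Iitaka dimension $\nu(D)$, and $\nu(D)=0$ together with $D$ nef forces $D\equiv 0$, contradicting $D\neq 0$. For the second assertion, Nakayama's divisorial Zariski decomposition gives $D\equiv N_{\sigma}(D)=\sum a_{i}[E_{i}]$ for finitely many prime divisors $E_{i}$ with positive coefficients $a_{i}$. Each $E_{i}$ is effective and hence pseudoeffective, so \cite{Lau16} also gives $E_{i}\cdot Y\geq 0$; combined with $\sum a_{i}(E_{i}\cdot Y)=0$ this forces $E_{i}\cdot Y=0$ for every $i$. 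The hypothesis that $Y$ meets every prime divisor of $X$ then gives $Y\cap E_{i}\neq\emptyset$, and if $Y\not\subset E_{i}$ the proper intersection $Y\cap E_{i}$ is a nonempty $0$-cycle, so $E_{i}\cdot Y>0$, a contradiction.

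What remains---and what I expect to be the main technical step---is to rule out $Y\subset E_{i}$ with $E_{i}\cdot Y=0$; equivalently, to show that a locally ample curve $Y$ cannot lie inside a prime Cartier divisor $F$ with $F\cdot Y=0$. On the blowup $\pi\colon\tilde{X}\to X$ of $X$ along $Y$ with exceptional divisor $E$, write $\pi^{*}F=\tilde{F}+mE$ with $m\geq 1$ and $\tilde{F}$ the strict transform. Restricting to $E$, the class $\pi^{*}F|_{E}$ is the pullback along $\pi|_{E}\colon E\to Y$ of the degree-zero class $F|_{Y}$ and so is numerically trivial on $E$. This yields $m[\mathscr{O}_{E}(E)]+[\tilde{F}|_{E}]=0$ in $\N^{1}(E)$, exhibiting $[\mathscr{O}_{E}(-E)]=\tfrac{1}{m}[\tilde{F}|_{E}]$ as a pseudoeffective class on $E$. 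By the Totaro-style characterization of $(\dim E-1)$-ample line bundles as those whose negatives fail to be pseudoeffective, this contradicts the $(r-1)$-ampleness of $\mathscr{O}_{E}(E)$ built into the locally ample hypothesis. The main obstacle is to execute this last step cleanly in the possibly singular setting of the paper, controlling the pullback $\pi^{*}F$ and appealing to the appropriate $q$-ample characterization---which is precisely where the systematic use of $q$-ample divisors advertised in the abstract enters.
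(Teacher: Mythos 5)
Your argument for the first statement is essentially the paper's: a nonzero nef class $\eta$ with $\eta\cdot Y=0$ has $\eta|_Y\equiv 0$, Theorem~\ref{thm: 3} gives $\kappa_\sigma(\eta)=0$, and nefness then forces $\eta\equiv 0$ (the paper proves this last implication directly in Proposition~\ref{prop:ott} by induction with Fujita vanishing rather than quoting $\kappa_\sigma=\nu$ for nef classes, but either justification is acceptable). That half is fine.

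The second statement is where there is a genuine gap. You apply Nakayama's divisorial Zariski decomposition $D\equiv N_\sigma(D)=\sum a_iE_i$ on $X$ itself, but $X$ is an arbitrary projective variety here: the $\sigma$-decomposition (Proposition~\ref{prop:Nak}, i.e.\ \cite[Proposition V.2.7]{Nak04}) is only available on a \emph{smooth} variety, and on a singular $X$ the prime divisors $E_i$ need not be $\mathbf{R}$-Cartier, so the quantities $E_i\cdot Y$ and $E_i|_Y$ on which your whole case analysis rests are not defined. The paper avoids this by passing to a resolution $f:X'\to\Bl_YX\to X$, decomposing $f^*\eta\equiv\sum b_iB_i$ there, killing the components disjoint from $f^{-1}(Y)$ (by the hypothesis on prime divisors they are $f$-exceptional, hence push forward to zero in $\N_{n-1}(X)$), and then --- this is the ingredient your outline lacks --- invoking the \emph{negativity lemma} to promote ``$\bigcup\Supp B_i$ meets $f^{-1}(Y)$'' to ``$f^{-1}(Y)\subset\bigcup\Supp B_i$'', so that $f'^*(\pi^*\eta-\epsilon E)$ is pseudoeffective; this contradicts Proposition~\ref{prop:num dom}, which says $\eta$ does not numerically dominate $Y$. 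Your acknowledged ``main technical step'' (a locally ample curve cannot sit inside a prime Cartier divisor $F$ with $F\cdot Y=0$) is the right obstruction in spirit, but note that $\pi^*F-E\geq 0$ already exhibits $F$ as numerically dominating $Y$, so this special case is an immediate consequence of Proposition~\ref{prop:num dom} without your computation on $E$ --- a computation which moreover needs $E$ irreducible to write $\pi^*F=\tilde F+mE$ and needs Theorem~\ref{theorem: n-1 ample} component by component on a possibly reducible exceptional divisor. As written, the reduction to that special case is broken by the singularities of $X$, and the case itself is left as a sketch, so the second assertion is not proved.
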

Following an observation of Peternell \cite[Conjecture 4.1]{Pet12},
Ottem already deduced that the cycle class of a locally complete intersection curve with ample normal bundle in a smooth projective variety lies in the interior of the cone of curves (\cite[Theorem 2]{Ott16}).
Indeed, if $\eta\in\N^{1}(X)_{\mathbf{R}}$ is nef and $\eta|_Y=0$, then the conjecture says $\kappa_{\sigma}(\eta)=0$,
which forces $\eta=0$.
Theorem \ref{thm: thmB} improves upon Ottem's result by removing any restrictions on smoothness of $X$ and $Y$.
Our proof is different from Ottem's in the sense that the theory of $q$-ample divisors is used here.

\begin{notation}
We work over a field of characteristic zero.
A variety is meant to be an integral scheme.
A curve is meant to be an integral scheme of dimension $1$.
\end{notation}
\begin{ack}
I would like to thank my advisor, Tommaso de Fernex, for his many comments that improves the exposition.
I would also like to thank Brian Lehmann and John Christian Ottem for helpful discussions,
and the referee for his careful reading of the paper and many useful suggestions.
This is part of the author's PhD thesis at University of Utah.
\end{ack}
\section{Preliminaries}
In this section, we shall recall the necessary definitions and tools needed.

\subsection{Dualizing sheaf}
\begin{definition}[Dualizing sheaf {\cite[p.241]{Har77}}]
Let $X$ be a projective scheme of dimension $n$.
A \textit{dualizing sheaf} for $X$
is a coherent sheaf $\omega_X$,
together with a trace map
$t:H^{n}(X,\omega)\rightarrow k$ to the ground field $k$, such that for any coherent sheaf $\mathscr{F}$ on $X$
the natural pairing
\[
H^{n}(X,\mathscr{F})\times \Hom(\mathscr{F},\omega_X)\rightarrow H^{n}(X,\omega_X),
\]
followed by $t$,
is perfect.
\end{definition}
\begin{proposition}\cite[Proposition 7.2, 7.5]{Har77}
Let $X$ be a projective scheme of dimension $n$.
Then the dualizing sheaf for $X$ exists and is unique up to unique isomorphism.
\end{proposition}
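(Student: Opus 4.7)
The plan is to prove uniqueness by a Yoneda-lemma argument and to prove existence by reducing to the known case of projective space via a closed embedding $X\hookrightarrow\mathbb{P}^{N}$.

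Uniqueness is immediate: the defining property exhibits any dualizing pair $(\omega_X,t)$ as a representing object for the contravariant functor $\mathscr{F}\mapsto H^{n}(X,\mathscr{F})^{\vee}$ on coherent sheaves. If $(\omega_X',t')$ is another such pair, applying representability with test objects $\omega_X$ and $\omega_X'$ produces mutually inverse morphisms compatible with the traces, and a routine check recovers the unique isomorphism $\omega_X\cong\omega_X'$.

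For existence, choose a closed embedding $\iota:X\hookrightarrow P:=\mathbb{P}^{N}$, set $r:=N-n$, and define
$$\omega_X^{\circ}:=\mathscr{E}xt^{r}_{\mathscr{O}_P}(\iota_{*}\mathscr{O}_X,\omega_P),\qquad \omega_P:=\mathscr{O}_P(-N-1).$$
As a preliminary step, one treats $P$ itself: an explicit \v{C}ech-cohomology computation of $H^{i}(P,\mathscr{O}_P(d))$ for all $i,d$ exhibits the perfect pairing for line bundles, and it is propagated to arbitrary coherent $\mathscr{F}$ by writing a two-step resolution of $\mathscr{F}$ by sums of line bundles $\mathscr{O}_P(m)$ and applying a short diagram chase. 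To descend from $P$ to $X$, one combines this projective-space Serre duality with the closed-immersion identity $H^{n}(X,\mathscr{F})=H^{n}(P,\iota_{*}\mathscr{F})$ and the local-to-global Ext spectral sequence
$$E_2^{p,q}=H^{p}\bigl(P,\mathscr{E}xt^{q}_{\mathscr{O}_P}(\iota_{*}\mathscr{F},\omega_P)\bigr)\Rightarrow \operatorname{Ext}^{p+q}_{\mathscr{O}_P}(\iota_{*}\mathscr{F},\omega_P).$$

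The principal obstacle is establishing the vanishing $\mathscr{E}xt^{q}_{\mathscr{O}_P}(\iota_{*}\mathscr{F},\omega_P)=0$ for $q<r$. Once this is in hand, the spectral sequence degenerates in low degree and yields $\operatorname{Ext}^{r}_{\mathscr{O}_P}(\iota_{*}\mathscr{F},\omega_P)\cong \Hom_{\mathscr{O}_X}(\mathscr{F},\omega_X^{\circ})$, which combined with Serre duality on $P$ gives the required universal property $\Hom_{\mathscr{O}_X}(\mathscr{F},\omega_X^{\circ})\cong H^{n}(X,\mathscr{F})^{\vee}$. The vanishing itself is a local statement that reduces to the commutative-algebra fact that a finitely generated module $M$ over a regular local ring with $\mathrm{codim}\,\mathrm{Supp}(M)\geq r$ satisfies $\mathrm{grade}(M)\geq r$, hence $\operatorname{Ext}^{q}(M,-)$ into a free module vanishes for $q<r$; this is the technical heart of the construction.
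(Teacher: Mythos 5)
The paper offers no proof of this proposition---it is quoted verbatim from Hartshorne \cite{Har77}*{III.7.2, III.7.5}---and your sketch is essentially the argument from that reference: Yoneda-type uniqueness, and existence via $\omega_X^{\circ}:=\mathscr{E}xt^{r}_{\mathscr{O}_P}(\iota_{*}\mathscr{O}_X,\omega_P)$ together with Serre duality on $\mathbb{P}^{N}$ and the vanishing of $\mathscr{E}xt^{q}_{\mathscr{O}_P}(\iota_{*}\mathscr{F},\omega_P)$ for $q<r$. The only deviations are harmless: you obtain the vanishing from the local grade bound over a regular local ring where Hartshorne deduces it globally by applying Serre duality on $P$ to large twists, and the identification $\operatorname{Ext}^{r}_{\mathscr{O}_P}(\iota_{*}\mathscr{F},\omega_P)\cong\Hom_{\mathscr{O}_X}(\mathscr{F},\omega_X^{\circ})$ requires a small additional functoriality argument (Hartshorne's Lemma 7.4) beyond the edge map of the local-to-global spectral sequence, but both points are standard.
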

We now show that a dualizing sheaf can be embedded into a sufficiently ample line bundle.
The proof can be found in the proof of \cite[Theorem 9.1]{Tot13}, but we include here for the sake of convenience.
\begin{lemma}[Embedding a dualizing sheaf into a line bundle]\label{lemma:dualizing}
Let $X$ be a projective variety of dimension $n$.
Then $\omega_X$ is torsion-free.
Moreover, given an ample divisor on $H$, there is $l$ such that there is an embedding $\omega_X\hookrightarrow \mathscr{O}_{X}(lH)$.
\end{lemma}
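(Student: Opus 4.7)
The plan is to first establish torsion-freeness of $\omega_X$ by applying the defining adjunction for the dualizing sheaf to the torsion subsheaf, then produce a nonzero morphism $\omega_X\to \mathscr{O}_X(lH)$ via Serre's vanishing theorem, and finally promote that morphism to an injection by combining torsion-freeness with the fact that $\omega_X$ has generic rank one.

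For torsion-freeness, let $T\subseteq \omega_X$ denote the torsion subsheaf. The inclusion $T\hookrightarrow \omega_X$ is an element of $\Hom(T,\omega_X)$, which the dualizing property identifies with $H^n(X,T)^\vee$. Since $T$ is torsion on the integral scheme $X$, it is supported on a proper closed subscheme, hence on a subset of dimension strictly less than $n$; Grothendieck vanishing therefore forces $H^n(X,T)=0$. Thus the inclusion must itself be zero, so $T=0$ and $\omega_X$ is torsion-free.

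For the embedding, consider $\mathscr{H}om(\omega_X,\mathscr{O}_X(lH))$. Since $\mathscr{O}_X(lH)$ is a line bundle, this equals $\omega_X^\vee\otimes \mathscr{O}_X(lH)$ where $\omega_X^\vee:=\mathscr{H}om(\omega_X,\mathscr{O}_X)$. Over the smooth locus $U\subseteq X$, which is dense by generic smoothness in characteristic zero, $\omega_X$ coincides with the invertible sheaf $\wedge^n \Omega^1_U$, so $\omega_X$ has generic rank one and $\omega_X^\vee$ is a nonzero coherent sheaf. By Serre's theorem, for $l\gg 0$ the sheaf $\omega_X^\vee\otimes \mathscr{O}_X(lH)$ is globally generated, and in particular admits a nonzero global section, i.e.\ a nonzero morphism $\phi\colon \omega_X\to \mathscr{O}_X(lH)$.

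It remains to verify that $\phi$ is injective. Its image $\im(\phi)$ is a nonzero subsheaf of the torsion-free sheaf $\mathscr{O}_X(lH)$, hence is itself torsion-free and in particular nonzero at the generic point $\eta$ of $X$. Because $(\omega_X)_\eta$ and $\mathscr{O}_X(lH)_\eta$ are both one-dimensional over $K(X)$, the stalk $\phi_\eta$ must be an isomorphism; therefore $\ker(\phi)$ is a torsion subsheaf of $\omega_X$, which vanishes by the first step. The main subtlety is the last step: upgrading ``nonzero'' to ``injective,'' which requires both torsion-freeness of $\omega_X$ (proved above) and the generic rank one property (coming from generic smoothness). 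Once these are in place, Serre vanishing for ample line bundles supplies the rest.
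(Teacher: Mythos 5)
Your proposal is correct and follows essentially the same route as the paper: torsion-freeness via $\Hom(\mathscr{T},\omega_X)\cong H^n(X,\mathscr{T})^\vee=0$ for the torsion subsheaf, then a nonzero section of $\omega_X^\vee\otimes\mathscr{O}_X(lH)$ for $l\gg 0$ by Serre, upgraded to an injection using torsion-freeness and generic rank one. You merely spell out the final injectivity step (generic stalk an isomorphism, kernel torsion) in more detail than the paper does.
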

\begin{proof}
Let us first show that $\omega_X$ is torsion-free.
Indeed, let $\mathscr{T}\subset\omega_X$ be the torsion subsheaf.
Then
\[
\Hom(\mathscr{T},\omega_X)\cong H^{n}(X,\mathscr{T})^{\vee}= 0.
\]
The last equality follows from the fact that $\mathscr{T}$ is supported at a proper closed subscheme of $X$.

As $\omega_X$ is generically a line bundle,
$\omega_X^{\vee}\neq 0$.
For $l$ large,
there is a nontrivial section $s\in H^{0}(X,\omega^{\vee}\otimes \mathscr{O}_X(lH))$.
This induces a nontrivial map $\omega_X \rightarrow \mathscr{O}_X(lH)$, which has to be an injection, since $\omega_X$ is torsion free of rank $1$.
\end{proof}

\subsection{\texorpdfstring{$q$}{q}-ample divisors}

The main tool used in this paper is the theory of $q$-ample divisors, developed by Sommese \cite{Som78}, Demailly-Peternell-Schneider \cite{DPS} and Totaro \cite{Tot13}.
Let us recall its definition.
\begin{definition}[$q$-ample line bundle {\cite{DPS},\cite{Tot13}}]
Let $X$ be a projective scheme. A line bundle bundle $\mathscr{L}$ is \textit{$q$-ample} 
if for any coherent sheaf $\mathscr{F}$ on $X$, there is an $m_{0}$ such that
\[
H^{i}(X,\mathscr{F}\otimes\mathscr{L}^{\otimes m})=0
\]
for $i>q$ and $m>m_0$.
\end{definition}

We shall give the definition of a Koszul-ample line bundle. 
The details are not very important in this paper,
but they are included for the sake of completeness.
Koszul-ample line bundle comes up in the definition of a $q$-T-ample line bundle, which we shall give shortly.
One useful fact is that any large tensor power of an ample line bundle is $2n$-Koszul-ample, where $n$ is the dimension of the underlying projective scheme \cite{Bac86}.
\begin{definition}[Koszul-ampleness  {\cite[Section 1]{Tot13}}]
Let $X$ be a projective scheme of dimension $n$,
and that the ring of regular function $\mathscr{O}(X)$ on $X$ is a field (e.g. $X$ is connected and reduced).
Given a very ample line bundle $\mathscr{O}_X(1)$, 
we say that it is \textit{$N$-Koszul ample} if the homogeneous coordinate ring $A=\bigoplus_{j} H^{0}(X,\mathscr{O}_X(j))$ is $N$-Koszul, i.e. there is a resolution
\[
\cdots\rightarrow M_1 \rightarrow M_0\rightarrow k\rightarrow 0
\]
where $M_i$ is a free $A$-module, generated in degree $i$, where $i\leq N$.
\end{definition}

\begin{definition}[$q$-T-ampleness {\cite[Definition 6.1]{Tot13}}]
Let $X$ be a projective variety of dimension $n$. 
We fix a $2n$-Koszul-ample line bundle $\mathscr{O}_{X}(1)$ on $X$.
We say that a line bundle $\mathscr{L}$ is \textit{$q$-T-ample} if there is a positive integer $N$, such that
\[
H^{q+i}(X,\mathscr{L}^{\otimes N}\otimes \mathscr{O}_X(-n-i))=0,
\]
for $1\leq i\leq n-q$.
\end{definition}
The following theorem is the key technical theorem in Totaro's paper.
\begin{theorem}\cite[Theorem 6.3]{Tot13}\label{thm:q-T-ample}
The notion of $q$-ampleness and $q$-T-ampleness are equivalent.
\end{theorem}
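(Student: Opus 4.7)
The plan is to treat the two implications separately. The direction \emph{$q$-ample $\Rightarrow$ $q$-T-ample} is essentially tautological: one applies the defining vanishing of $q$-ampleness to each of the finitely many coherent sheaves $\mathscr{O}_X(-n-i)$ for $1 \leq i \leq n-q$, and takes $N$ to exceed the maximum of the resulting thresholds $m_0$. This immediately produces the integer $N$ witnessing $q$-T-ampleness.

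For the converse, my strategy is to reduce the desired vanishing on an arbitrary coherent sheaf $\mathscr{F}$ to the finite list of vanishings supplied by the hypothesis, using the Koszul resolution furnished by the fixed $2n$-Koszul-ample line bundle $\mathscr{O}_X(1)$. Specifically, $2n$-Koszul-ampleness should allow me to build a bounded resolution of $\mathscr{F}$ (possibly after a positive twist) by finite direct sums of sheaves $\mathscr{O}_X(-k)$ whose indices $k$ lie in a range controlled solely by $n$. Tensoring this resolution with $\mathscr{L}^{\otimes m}$ and running the associated hypercohomology spectral sequence, the required vanishing $H^i(X,\mathscr{F}\otimes \mathscr{L}^{\otimes m})=0$ for $i>q$ reduces to a finite list of vanishings of the shape $H^j(X,\mathscr{L}^{\otimes m}\otimes \mathscr{O}_X(-k))$ with $j>q$ and $k$ in the prescribed bounded range.

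The crucial step, and the one I expect to be the main obstacle, is to bridge the gap between the \emph{single-power} input of $q$-T-ampleness and the \emph{asymptotic} output demanded by $q$-ampleness. Writing a general large exponent as $m = sN + r$ with $0 \leq r < N$, I would iterate the single vanishing at power $N$, using Serre vanishing for the auxiliary line bundle $\mathscr{O}_X(1)$ to absorb the remainder $r$ and all intermediate twists. Lemma \ref{lemma:dualizing} is useful here: embedding $\omega_X$ into a sufficiently positive twist of $\mathscr{O}_X(1)$ combines with Serre duality to interconvert vanishings between complementary cohomological degrees, which in turn drives an induction on the cohomological degree $i$, downwards from $n$ to $q+1$.

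The delicate point is the bookkeeping required to make this uniform in $m$: the hypothesis is a \emph{rigid} finite condition (vanishing for $n-q$ sheaves at one power), whereas the conclusion is an \emph{infinite} family of vanishings for every coherent sheaf and every sufficiently large $m$. The Koszul resolution is the essential technical device that makes this transfer feasible, and the main difficulty lies in carefully tracking the interplay between the length of the resolution, the induction on cohomological degree, and the decomposition $m = sN + r$, so that the finite vanishings at power $N$ genuinely propagate to an asymptotic statement.
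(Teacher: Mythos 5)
The paper gives no proof of this statement---it is imported verbatim from Totaro---so your proposal must be measured against Totaro's argument. Your easy direction ($q$-ample $\Rightarrow$ $q$-T-ample) is correct and is indeed tautological. Your outline of the converse has the right external shape: resolve $\mathscr{F}$ by bounded twists of $\mathscr{O}_X(1)$, run the spectral sequence, and divide $m=sN+r$ (though the remainders need no Serre vanishing: the $\mathscr{F}\otimes\mathscr{L}^{\otimes r}$ for $0\le r<N$ are just finitely many coherent sheaves to which $q$-ampleness of $\mathscr{L}^{\otimes N}$ is applied). But the step you flag as ``the main obstacle'' is exactly the step you never supply, and it is the entire content of the theorem: how does the vanishing $H^{q+i}(X,\mathscr{L}^{\otimes N}(-n-i))=0$ at the \emph{single} power $N$ propagate to the powers $2N,3N,\dots$? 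There is no formal iteration: tensoring a vanishing statement by $\mathscr{L}^{\otimes N}$ does not preserve vanishing, since $\mathscr{L}$ may be very negative in other respects. Totaro's proof goes through a tensor-product theorem for partial Castelnuovo--Mumford-type regularity (if two sheaves each satisfy the partial vanishing condition relative to $\mathscr{O}_X(1)$, so does their tensor product), and \emph{that} is proved using the Koszul resolution of the diagonal on $X\times X$---which is precisely where $2n$-Koszul-ampleness, as opposed to mere very ampleness, is indispensable. In your sketch Koszul-ampleness is used only to resolve a single sheaf $\mathscr{F}$, a task any sufficiently positive ample bundle performs; the real use is the multiplicativity statement that makes $N\mapsto 2N\mapsto\cdots$ possible, and without it your ``iteration'' has no engine.

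A secondary problem is your proposed mechanism for the downward induction on cohomological degree. The relevant propagation step (from $H^{q+i}(X,L(-n-i))=0$ for all $i\ge1$ in range to $H^{q+i}(X,L(-n-i+j))=0$ for all $j\ge0$) is a Mumford-regularity-style telescoping argument: split the exact Koszul complex $\cdots\to\Lambda^{2}V\otimes\mathscr{O}_X(-2)\to V\otimes\mathscr{O}_X(-1)\to\mathscr{O}_X\to0$ into short exact sequences and chase the long exact cohomology sequences. Duality plays no role there, and on a general projective variety it cannot: Lemma \ref{lemma:dualizing} and the dualizing sheaf only give the pairing $\Hom(\mathscr{F},\omega_X)\cong H^{n}(X,\mathscr{F})^{\vee}$ in the \emph{top} degree $n$. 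Interconverting vanishings in intermediate degrees $q+1\le i<n$ would require a dualizing complex or Cohen--Macaulay hypotheses that are not available, so the duality-driven induction from $n$ down to $q+1$ that you describe does not get off the ground.
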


\begin{definition}[$q$-ample $\mathbf{R}$-Cartier $\mathbf{R}$-divisors]
Let $X$ be a projective scheme.
An $\mathbf{R}$-Cartier $\mathbf{R}$-divisor $D$ on $X$ is \textit{$q$-ample} if $D$ is numerically equivalent to $cL+A$ with $L$ a $q$-ample line bundle, $c\in \mathbf{R}_{>0}$, $A$ an ample $\mathbf{R}$-Cartier $\mathbf{R}$-divisor.
\end{definition}
Based on the work of Demailly, Peternell and Schneider, Totaro also proved that 
\begin{theorem}[{\cite[Theorem 8.3]{Tot13}}]\label{thm:open}
An integral divisor is $q$-ample if and only if its associated line bundle is $q$-ample.
The $q$-ample $\mathbf{R}$-Cartier $\mathbf{R}$-divisors in $\N^{1}(X)_{\mathbf{R}}$ defines an open cone (but not convex in general) and that the sum of a $q$-ample $\mathbf{R}$-Cartier $\mathbf{R}$-divisor and an $r$-ample $\mathbf{R}$-Cartier $\mathbf{R}$-divisor is $(q+r)$-ample.
\end{theorem}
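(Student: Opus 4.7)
The plan is to reduce everything to $q$-T-ampleness via Theorem \ref{thm:q-T-ample}, which replaces the Serre-type condition involving every coherent sheaf by a finite set of cohomological vanishings $H^{q+i}(X,L^{\otimes N}\otimes \mathscr{O}_X(-n-i))=0$ for $1\leq i\leq n-q$. The key point is that a condition on finitely many cohomology groups behaves well under small perturbation and under tensor product, while the Serre-type condition does not obviously do so.

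For the openness and for the equivalence between integral-divisor $q$-ampleness and line-bundle $q$-ampleness, I would first show that the $q$-ample locus is open in $\N^1(X)_{\mathbf{R}}$. Given a $q$-T-ample $L$ with witnessing integer $N$ and an ample divisor $A$, I want to show $L+\epsilon A$ remains $q$-T-ample for all small rational $\epsilon$. For $\epsilon>0$ this is Serre vanishing after passing to a sufficiently high multiple; for $\epsilon<0$ one rescales by taking a high enough power of $L$ so that the ample deficit $kL-\epsilon A$ is still ample, and then uses Serre vanishing on the ample part to maintain the finite list of cohomology vanishings. For numerical invariance of integral divisors, two numerically equivalent line bundles differ by a class in $\mathrm{Pic}^\tau(X)$; this family is bounded, so the uniform boundedness lets one choose a single $N$ making the required vanishings hold after any such twist, thereby showing that $q$-T-ampleness only depends on the numerical class.

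For the sum property, given $L$ $q$-T-ample with witness $N_1$ and $M$ $r$-T-ample with witness $N_2$, I would verify the $(q+r)$-T-ample condition for $L\otimes M$ by resolving an arbitrary coherent sheaf by sums of negative twists $\mathscr{O}_X(-j)$ via the $2n$-Koszul-ample bundle $\mathscr{O}_X(1)$ and then running a spectral-sequence argument. The vanishings for $L$ alone kill contributions in cohomological degrees $>q$ for enough sheaves on the $L$-side, the vanishings for $M$ alone do the same in degrees $>r$, and a double-complex computation should combine the two ranges so that the total cohomology vanishes in degrees $>q+r$ for the tensor product $L^{N}\otimes M^{N}$ with $N$ a common multiple of $N_1$ and $N_2$.

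The main obstacle will be the sum property: carefully tracking the degree shifts in the Koszul resolution and in the associated spectral sequence so that the two separate finite vanishing ranges combine into the required range for the tensor product, and controlling the integer $N$ witnessing $(q+r)$-T-ampleness of $L\otimes M$ in terms of $N_1$ and $N_2$. The openness and numerical invariance steps are conceptually cleaner once the $q$-T-ample characterization is available, though the passage from $\mathbf{Q}$- to $\mathbf{R}$-divisors and the handling of $\mathrm{Pic}^\tau(X)$-torsion require some care.
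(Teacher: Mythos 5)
The paper does not prove this statement at all: it is quoted verbatim from Totaro (\cite[Theorems 8.1--8.3]{Tot13}), so there is no internal proof to compare against. Judged against Totaro's actual argument, your sketch starts in the right place (reduce everything to the finite list of vanishings defining $q$-T-ampleness via Theorem \ref{thm:q-T-ample}) but omits the one ingredient that makes the whole theorem work, namely Totaro's \emph{uniform} version of that criterion (his Theorem 7.1): there are constants, depending only on $X$ and $\mathscr{O}_X(1)$, such that $\mathscr{L}$ is $q$-ample if and only if the Castelnuovo--Mumford regularity of $\mathscr{L}^{\otimes N}$ satisfies a linear bound of the form $\reg(\mathscr{L}^{\otimes N})\leq \lambda qN-\mu$ for some $N$. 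Without this effective reformulation, each of your three steps stalls on a uniformity problem.

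Concretely: for openness, the hard direction is $L\mapsto L-\epsilon A$, and your proposed fix (``rescale so that $kL-\epsilon A$ is still ample, then apply Serre vanishing'') does not parse --- $L$ is only $q$-ample, so $kL-\epsilon A$ has no reason to be ample, and Serre vanishing on an ample summand gives no control over the witness $N$ for $N(L-\epsilon A)$, which is exactly what must be bounded as $\epsilon\to 0$. Totaro instead perturbs inside the regularity inequality, where a strict linear bound visibly survives small perturbations of the numerical class (and the $\mathrm{Pic}^{\tau}$-boundedness you invoke is used there too, but only through uniform regularity bounds for bounded families). For the sum property, your double-complex plan needs $H^{i}(X,\mathscr{L}^{\otimes N}\otimes\mathscr{G})=0$ for $i>q$ with $\mathscr{G}$ ranging over twists built from $\mathscr{M}^{\otimes N}$; since this family is unbounded as $N$ grows, the threshold in the definition of $q$-ampleness is uncontrolled and the spectral sequence does not close up. Totaro's proof avoids this entirely by combining Theorem 7.1 with the subadditivity $\reg(\mathscr{L}\otimes\mathscr{M})\leq\reg(\mathscr{L})+\reg(\mathscr{M})$ (a consequence of Koszul-ampleness of $\mathscr{O}_X(1)$), which adds the two linear bounds to give the $(q+r)$-bound directly. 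You should either import Theorem 7.1 as a black box or be prepared to reprove it; as written, the sketch has a genuine gap at every point where a single $N$ must work for an infinite family.
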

\begin{remark}
Totaro's paper relies on \cite[Theorem 1.4]{DPS} for a proof of the fact that q-ampleness descends
to numerical equivalence classes, but the proof given in \cite{DPS} only works in
the smooth case. 
For projective varieties in general the claim is treated in Greb and K\"uronya's paper \cite[Theorem 2.17]{GK15}.
\end{remark}
\begin{theorem}[{\cite[Theorem 9.1]{Tot13}}]\label{theorem: n-1 ample}
Let $X$ be a projective variety of dimension $n$. 
A line bundle $\mathscr{L}$ on $X$ is $(n-1)$-ample if and only if
$[\mathscr{L}^{\vee}]\in N^{1}(X)$ does not lie in the pseudoeffective cone.
\end{theorem}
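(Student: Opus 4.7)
The plan is to compare the cohomological characterization of $(n-1)$-ampleness with the location of $[\mathscr{L}^{-1}]$ inside the pseudoeffective cone. By Theorem \ref{thm:q-T-ample} specialized to $q = n-1$ (where only $i = 1$ arises), $\mathscr{L}$ is $(n-1)$-ample if and only if some $N > 0$ satisfies $H^{n}(X, \mathscr{L}^N \otimes \mathscr{O}_X(-n-1)) = 0$, and Serre duality via the dualizing sheaf converts this to the existence of $N$ with $H^{0}(X, \omega_X \otimes \mathscr{L}^{-N}(n+1)) = 0$. The theorem thus amounts to showing that such a vanishing occurs precisely when $[\mathscr{L}^{-1}] \notin \overline{\Eff}^{1}(X)$.

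For the direction ``not pseudoeffective $\Rightarrow$ $(n-1)$-ample'', I would combine the embedding $\omega_X \hookrightarrow \mathscr{O}_X(l)$ from Lemma \ref{lemma:dualizing} with the desired vanishing to reduce the task to producing some $N$ with $H^{0}(X, \mathscr{L}^{-N}(n+1+l)) = 0$. If this failed for every $N$, then $N[\mathscr{L}^{-1}] + (n+1+l)[A]$ would be effective (hence pseudoeffective) for all $N \geq 1$; dividing by $N$ and letting $N \to \infty$ would exhibit $[\mathscr{L}^{-1}]$ as a limit of pseudoeffective classes, contradicting closedness of $\overline{\Eff}^{1}(X)$.

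For the harder direction ``$(n-1)$-ample $\Rightarrow$ not pseudoeffective'', argue by contradiction. By openness of the $(n-1)$-ample cone (Theorem \ref{thm:open}), one can perturb $\mathscr{L}$ to an integer line bundle $M := q\mathscr{L} - pA$ still $(n-1)$-ample for suitable $p, q \in \mathbf{Z}_{>0}$; if $[\mathscr{L}^{-1}]$ were pseudoeffective, then $[M^{-1}]$ would be pseudoeffective-plus-ample, hence big. So it suffices to contradict ``$M$ integer and $(n-1)$-ample with $M^{-1}$ big''. Bigness together with Kodaira's lemma yields $-M \equiv A' + E'$ (ample plus effective $\mathbf{Q}$-divisor), so for $m$ in a suitable arithmetic progression the class $-mM - l_0 A \equiv (mA' - l_0 A) + mE'$ has an effective representative. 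Combined with an auxiliary injection $\mathscr{O}_X(-l_0) \hookrightarrow \omega_X$, this gives
\[
0 \neq H^{0}(X, M^{-m}(-l_0)) \hookrightarrow H^{0}(X, \omega_X \otimes M^{-m})
\]
for infinitely many $m$; Serre duality then produces $H^{n}(X, M^m) \neq 0$ infinitely often, contradicting $(n-1)$-ampleness of $M$ applied to $\mathscr{F} = \mathscr{O}_X$.

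The most delicate step is the auxiliary injection $\mathscr{O}_X(-l_0) \hookrightarrow \omega_X$, which does not follow from Lemma \ref{lemma:dualizing} since $\omega_X$ need not be locally free. Here the plan is to use that $\omega_X$ is torsion-free of generic rank one, so it sits in a short exact sequence $0 \to \omega_X \to \omega_X^{\vee\vee} \to Q \to 0$ with $\dim \Supp Q < n$. Twisting by $\mathscr{O}_X(l_0)$ and invoking Serre vanishing gives $H^{1}(X, \omega_X(l_0)) = 0$ for $l_0 \gg 0$, hence $H^{0}(X, \omega_X(l_0)) = \ker\bigl(H^{0}(X, \omega_X^{\vee\vee}(l_0)) \to H^{0}(X, Q(l_0))\bigr)$; Hilbert-polynomial asymptotics ($\sim l_0^{n}$ versus $O(l_0^{n-1})$) force this kernel to be nonzero for $l_0 \gg 0$, and torsion-freeness of $\omega_X$ ensures any nonzero section induces the desired injection.
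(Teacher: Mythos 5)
This statement is quoted verbatim from Totaro's paper and the present paper supplies no proof of its own, so the only meaningful comparison is with Totaro's Theorem 9.1: your argument is correct and is essentially his original proof in both directions --- the ``not pseudoeffective $\Rightarrow$ $(n-1)$-ample'' direction via the $(n-1)$-T-ampleness criterion of Theorem \ref{thm:q-T-ample}, duality, and the embedding $\omega_X\hookrightarrow\mathscr{O}_X(l)$ of Lemma \ref{lemma:dualizing}, and the converse via openness of the $(n-1)$-ample cone (Theorem \ref{thm:open}, which logically precedes this result, so no circularity), bigness of the perturbed dual, and a reverse injection $\mathscr{O}_X(-l_0)\hookrightarrow\omega_X$. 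The one point to tighten is that ``$-mM-l_0A$ has an effective representative'' must produce a section of the actual line bundle $M^{-m}(-l_0)$ rather than of a numerically equivalent one; this is standard (subtract the integral effective part, note the remainder is numerically ample hence ample, and pass to suitable multiples $m=km_1$), but as written the phrase conflates the numerical class with the line bundle.
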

\begin{definition}[$q$-almost ample]
Let $X$ be a projective scheme and let $A$ be an ample divisor on $X$.
We say that a $\mathbf{R}$-Cartier $\mathbf{R}$-divisor $D$ is $q$-\textit{almost ample} if $D+\epsilon A$ is $q$-ample for all $0<\epsilon \ll 1$.
\end{definition}

\subsection{\texorpdfstring{$\sigma$}{sigma}-dimension}

Let us start with the definition of the $\sigma$-dimension of an $\mathbf{R}$-Cartier $\mathbf{R}$-divisor.
\begin{definition}[$\sigma$-dimension]
Let $X$ be a projective variety.
Let $D=\sum a_i C_i$ be an $\mathbf{R}$-Cartier $\mathbf{R}$-divisor, where $a_i\in \mathbf{R}$ and $C_i$'s are integral Cartier divisor and let $H$ be any integral Cartier divisor. 
We then define
\begin{multline*}
\kappa_\sigma(D)\\
:=\max_{H \text{ integral Cartier}}\{\max\{l\in\mathbb{Z}|\,\limsup_{t\rightarrow \infty}\frac{h^{0}(X,\mathscr{O}_X(\sum\lfloor ta_i\rfloor C_i +H))}{t^l}>0\}\}.
\end{multline*}
\end{definition}
This is a measure of positivity of an $\mathbf{R}$-Cartier $\mathbf{R}$-divisor that lies on the boundary of the pseudoeffective cone.
However, this definition looks slightly different from the one that appeared in the literature (\cite{Nak04},\cite{Leh13} and \cite{Eck16}).
We shall prove in Proposition \ref{prop: numdim} that the definition is well-posed, i.e. independent of the decomposition $D=\sum a_i C_i$; is a numerical invariant and agrees with the usual definition when $X$ is smooth.
Nakayama's proof of the fact that $\sigma$-dimension is a numerical invariant relies on an Angehrn-Siu type argument, which requires smoothness of $X$.
On a singular projective variety $X$, it is possible to define the $\sigma$-dimension of a class $\eta\in\N^{1}(X)$ via the following way, due to Lehmann \cite[Chapter 6.1]{Leh13}.
Take a resolution of singularities of $X$, $\pi:\tilde{X}\rightarrow X$, define $\kappa_\sigma(\eta):=\kappa_\sigma(\pi^*\eta)$ and note that on smooth projective varieties the $\sigma$-dimension is a birational invariant \cite[Proposition V.2.7]{Nak04}.

The proof of the following lemma was suggested by the referee.
\begin{lemma}\label{lemma: global section}
Let $X$ be a projective variety.
Let $\mathscr{B}\subset \N^{1}(X)_{\mathbf{R}}$ be a bounded subset.
Let $H$ be an ample divisor on $X$,
Then for $m\gg 0$,
\[
H^{0}(X,\mathscr{O}_X(mH-C))\neq 0,
\]
for any integral Cartier divisor $C$ with $[C]\in \mathscr{B}$.
\end{lemma}
\begin{proof}
We prove by induction on $\dim X=n$.
This is true if $\dim X=1$ by the Riemann-Roch theorem.

Take a general hyperplane section in $H_1\in|m_1 H|$ for $m_1\gg 0$. 
It is irreducible and reduced.
Consider the short exact sequence
\[
0\rightarrow
\mathscr{O}_X(m_2 H -C)
\rightarrow
\mathscr{O}_X((m_1+m_2)H-C)
\rightarrow
\mathscr{O}_{H_1}((m_1+m_2)H-C)
\rightarrow 0.
\]
By induction,
$h^{0}(H_1,\mathscr{O}_{H_1}((m_1+m_2)H-C))\neq 0$ for $m_2\gg 0$ and for any integral Cartier divisor $C$ with $[C]\in \mathscr{B}$.
By Fujita vanishing theorem,
$h^{1}(X,\mathscr{O}_X(m_2 H -C))=0$ for $m_2\gg 0$ and for any integral Cartier divisor $C$ with $[C]\in \mathscr{B}$.
These imply that $h^{0}(X,\mathscr{O}_X((m_1+m_2)H-C))\neq 0$ for $m_2\gg 0$ and for any integral Cartier divisor $C$ with $[C]\in \mathscr{B}$.


\end{proof}

\begin{proposition}\label{prop: numdim}
Let $X$ be a projective variety and let $D$ be a pseudoeffective $\mathbf{R}$-Cartier $\mathbf{R}$-divisor on $X$. Then
\begin{enumerate}
    \item\label{item: numerical} The definition of $\kappa_\sigma(D)$ does not depend on the decomposition $D=\sum a_i C_i$. In fact, if $D\equiv D'$, then $\kappa_\sigma(D)=\kappa_\sigma(D')$.
    \item\label{item: numerical2}
    Assuming that $X$ is smooth,
    \begin{multline*}
    \kappa_{\sigma}(D)\\
=\max_{H \text{integral Cartier}}\{\max\{
l\in \mathbb{Z}|\,\limsup_{m\rightarrow\infty}\frac{h^{0}(X,\mathscr{O}_X(\lfloor mD\rfloor+H))}{m^{l}}>0
\}\}.
\end{multline*}
The right hand side of this equation is the usual definition of the $\kappa_\sigma(D)$ (\cite{Nak04},\cite{Leh13},\cite{Eck16}). 
Here we are rounding down $D$ as an $\mathbf{R}$-Weil divisor.
\end{enumerate}
\end{proposition}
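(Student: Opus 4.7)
The plan is to reduce both parts to Lemma \ref{lemma: global section}, exploiting the following observation: although the rounded integral Cartier divisor $\sum_i\lfloor ta_i\rfloor C_i$ grows with $t$ along the ray $t[D]$, the \emph{difference} between two rounded representatives of numerically equivalent $\mathbf{R}$-Cartier divisors stays in a bounded subset of $\N^1(X)_{\mathbf{R}}$ uniformly in $t$. A single auxiliary Cartier divisor, provided by Lemma \ref{lemma: global section}, then absorbs all these discrepancies simultaneously, bypassing the need for Angehrn--Siu techniques.

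For part \ref{item: numerical}, let $D=\sum_i a_iC_i\equiv\sum_j a_j'C_j'=D'$. Form the integral Cartier divisor $C_t:=\sum_i\lfloor ta_i\rfloor C_i-\sum_j\lfloor ta_j'\rfloor C_j'$. Writing $ta_i=\lfloor ta_i\rfloor+\{ta_i\}$ and using $[D]=[D']$ in $\N^1(X)_{\mathbf{R}}$, we compute
\[
[C_t]=\sum_j\{ta_j'\}[C_j']-\sum_i\{ta_i\}[C_i],
\]
so $\{[C_t]\}_{t>0}$ lies in a bounded subset $\mathscr{B}\subset \N^1(X)_{\mathbf{R}}$ since all fractional parts belong to $[0,1)$. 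By Lemma \ref{lemma: global section} there is an integral Cartier $H_0$, independent of $t$, with $h^0(X,\mathscr{O}_X(H_0-C_t))>0$ for every $t$. A nonzero section yields an injection $\mathscr{O}_X\hookrightarrow \mathscr{O}_X(H_0-C_t)$; twisting by $\mathscr{O}_X(\sum_i\lfloor ta_i\rfloor C_i+H)$ and taking $H^0$ gives
\[
h^0(X,\mathscr{O}_X({\textstyle\sum_i\lfloor ta_i\rfloor C_i}+H))\leq h^0(X,\mathscr{O}_X({\textstyle\sum_j\lfloor ta_j'\rfloor C_j'}+H+H_0))
\]
for every integral Cartier $H$. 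Dividing by $t^l$, taking $\limsup$, and then maximizing over $H$ shows the quantity built from the first decomposition is dominated by that built from the second (the auxiliary $H$ absorbs the shift by $H_0$). Swapping the roles gives the reverse inequality, establishing both independence of the decomposition and numerical invariance.

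For part \ref{item: numerical2}, assume $X$ smooth. Then every prime Weil divisor is Cartier, and writing $D=\sum_i a_iD_i$ in the prime Weil decomposition (with distinct integral $D_i$) gives $\sum_i\lfloor ta_i\rfloor D_i=\lfloor tD\rfloor$ verbatim as Weil divisors, so the formula in the definition literally coincides with the standard one for this decomposition; part \ref{item: numerical} then upgrades this to agreement of the two $\sigma$-dimensions. The principal obstacle in the entire argument is really only the uniform-boundedness observation for $\{[C_t]\}_t$: once that is in hand, Lemma \ref{lemma: global section} is invoked a single time to produce one $H_0$ good for all $t$ simultaneously, which is precisely what makes the $q$-ample route work on singular $X$.
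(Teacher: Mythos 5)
Your proof is correct and takes essentially the same route as the paper: both arguments observe that the rounding discrepancies between the two decompositions stay in a bounded subset of $\N^{1}(X)_{\mathbf{R}}$ uniformly in $t$, invoke Lemma \ref{lemma: global section} once to produce a single auxiliary Cartier divisor absorbing them all, and deduce the two-sided inequality by symmetry. Part \ref{item: numerical2} is handled identically, by noting that on a smooth variety the prime decomposition has Cartier components and the Weil round-down splits termwise, so the claim reduces to part \ref{item: numerical}.
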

\begin{proof}
For (\ref{item: numerical}),
suppose $D\equiv D'$, $D=\sum a_i C_i$ and $D'=\sum a'_i C'_i$.
By lemma \ref{lemma: global section}, there is an integral Cartier divisor $H'$ such that $\mathscr{O}_X(H'+C)$ is effective for any integral Cartier $C\equiv\sum r_i C_i + \sum r'_j C'_j$ where $r_i, r'_j\in [-2,2]$.
Given any integral Cartier divisor $H$, write $\sum \lfloor ma'_i\rfloor C'_i +H+H'$ as
\[
\sum \lfloor ma_i\rfloor C_i+H+(\sum \lfloor ma'_i\rfloor C'_i - mD') + (mD-\sum \lfloor ma_i\rfloor C_i)+(mD'-mD) +H'.
\]
This implies $h^{0}(X,\mathscr{O}_X(\sum \lfloor ma_i\rfloor C_i +H))\leq h^{0}(X,\mathscr{O}_X(\sum \lfloor ma'_i\rfloor C'_i +H+H'))$.
We can reverse the roles of $D$ and $D'$ and conclude (\ref{item: numerical}).

For (\ref{item: numerical2}),
$D$ is expressed uniquely as $\sum a_i \Gamma_i$, where $\Gamma_i$'s are prime divisors (which are Cartier by the smoothness assumption), $a_i\in \mathbf{R}$.
We have $\lfloor mD\rfloor=\sum \lfloor ma_i\rfloor \Gamma_i$, the equality then follows from (\ref{item: numerical}).

\end{proof}

Thanks to Proposition \ref{prop: numdim} (\ref{item: numerical}), we may refer to $\kappa_{\sigma}(\eta)$, where $\eta\in\N^{1}(X)_{\mathbf{R}}$, without ambiguity.

Here are some of the basic properties of $\kappa_\sigma(D)$.
The proofs of (\ref{it: pullback}) and (\ref{it: big}) are essentially the same as the ones given in \cite[Proposition V.2.7]{Nak04}.
\begin{proposition}[Basic properties\label{prop:basic}]
Let $X$ be a projective variety of dimension $n$ and let $\eta\in\N^{1}(X)_{\mathbf{R}}$.
\begin{enumerate}
    \item \label{it: pullback}
    If $f:X'\rightarrow X$ is a surjective morphism from a projective variety, then $\kappa_\sigma(\eta)=\kappa_\sigma(f^{*}(\eta))$.
    \item \label{it: range}
    $\kappa_\sigma(\eta)\leq n$.
    \item \label{it: range 2}
    $\kappa_\sigma(\eta)\geq 0$ if and only if $\eta$ is pseudoeffective.
    \item \label{it: big}
    $\kappa_\sigma(\eta)=n$ if and only if $\eta$ is big.
\end{enumerate}
\end{proposition}
\begin{proof}
Let $D=\sum a_i C_i$ be an $\mathbf{R}$-Cartier $\mathbf{R}$-divisor on $X$, such that the numerical class of $D$ is $\eta$.
For (\ref{it: pullback}), we let $H$ be an ample divisor on $X'$.
First, we claim that $f_{*}\mathscr{O}_{X'}(H)$ is torsion-free.
Since $f$ is surjective, the natural map
$\mathscr{O}_X\rightarrow f_{*}\mathscr{O}_{X'}$ is an injection.
Say a section $s\in f_{*}\mathscr{O}_{X'}(H)(U)$, $s\neq 0$, is torsion, i.e. there is $r\in\mathscr{O}_X(U)$, $r\neq 0$ such that $r\cdot s=0$.
Here $U$ is an open subset of $X$.
But $r$ and $s$ can be identified as nontrivial sections of $\mathscr{O}_{X'}(f^{-1}(U))$ and $\mathscr{O}_{X'}(H)(f^{-1}(U))$ respectively.
This contradicts the fact that $\mathscr{O}_{X'}(H)$ is invertible.
Next, as $f_{*}\mathscr{O}_{X'}(H)$ is torsion-free,
the canonical map $f_{*}\mathscr{O}_{X'}(H)\rightarrow (f_{*}\mathscr{O}_{X'}(H))^{\vee\vee}$ is injective.
There is some ample divisor $A$ on $X$ such that
we have the following surjection
$\oplus_{1}^k \mathscr{O}_X(-A)\twoheadrightarrow (f_{*}\mathscr{O}_{X'}(H))^{\vee}$.
Dualizing, this gives an injection $f_{*}\mathscr{O}_{X'}(H)\hookrightarrow
\oplus_{1}^k \mathscr{O}_X(A)$.
Hence,
$h^{0}(X',\mathscr{O}_{X'}(\sum\lfloor ta_i\rfloor f^{*}C_i +H))
\leq
k\cdot h^{0}(X,\mathscr{O}_{X}(\sum\lfloor ta_i\rfloor C_i +A))
$
and $\kappa_\sigma(\eta)\geq \kappa_\sigma(f^{*}(\eta))$.
The other direction is obvious.

For (\ref{it: range}), take a sufficiently ample divisor $H$ that computes the $\kappa_{\sigma}(D)$ and that $H-D$ is ample.
Then $h^{0}(X,\mathscr{O}_{X}(\sum\lfloor ta_i\rfloor C_i +H))\leq h^{0}(X,\mathscr{O}_{X}(\lfloor t +1 \rfloor H))$ for $t\gg 0$.
It follows that $\kappa_{\sigma}(D)\leq n$.

For (\ref{it: range 2}), 
if $\kappa_{\sigma}(D)\geq 0$,
then there is some divisor $H$ and a sequence $t_j\rightarrow \infty$ such that
$h^{0}(X,\mathscr{O}_X(\sum_i \lfloor t_j a_i\rfloor C_i+H))\neq 0$.
Write
\[
D=\frac{1}{t_j}(\sum_i \lfloor t_j a_i\rfloor C_i+H)+\frac{1}{t_j}(\sum_i (t_j a_i-\lfloor t_j a_i\rfloor) C_i-H).
\]
We observe that the first term on the right hand side is effective and the second term goes to $0$ as $t_j\rightarrow \infty$.
Thus, $D$ is pseudoeffective.

Now assume that $\kappa_{\sigma}(D)< 0$,
i.e.
for any divisor $H'$, $h^{0}(X,\mathscr{O}_{X}(\sum\lfloor t a_i\rfloor C_i +H'))= 0$ for all $t\gg 0$,
we would like to show that $D$ is not pseudoeffective.
By taking a sufficiently large multiple of an ample divisor,
we can find a Koszul-ample divisor $H$ such that $H+\sum e_i C_i$ is ample for any $e_i\in [-1,1]$.
By Lemma \ref{lemma:dualizing}, we can find an embedding of the dualizing sheaf of $X$,
$\omega_X\hookrightarrow \mathscr{O}_X(mH)$ for some large $m$.
By Serre duality,
$h^n(X,\mathscr{O}_X(-\sum\lfloor t a_i\rfloor C_i-H)\otimes \mathscr{O}_X((-n-1)H))
=h^0(X,\omega_X\otimes \mathscr{O}_X(\sum\lfloor t a_i\rfloor C_i +H)\otimes \mathscr{O}_X((n+1)H))
\leq
h^0(X,\mathscr{O}_X(mH)\otimes\mathscr{O}_X(\sum\lfloor t a_i\rfloor C_i +H)\otimes \mathscr{O}_X((n+1)H))$,
which is $0$ for $t\gg 0$ by assumption.

This shows that $-\sum\lfloor t a_i\rfloor C_i-H$ is $(n-1)$-ample for $t\gg 0$ by theorem \ref{thm:q-T-ample} 
and
implies that $\sum\lfloor t a_i\rfloor C_i+H= tD+(H-\sum (ta_i-\lfloor t a_i\rfloor) C_i)$ is not pseudoeffective by theorem \ref{theorem: n-1 ample}, hence $D$ is not pseudoeffective as well.

For (\ref{it: big}), if $D$ is big, it is clear that $\kappa_{\sigma}(D)=n$ by (\ref{it: range}).
Now assume that $\kappa_{\sigma}(D)=n$.
Let $H$ be a sufficiently ample divisor that computes $\kappa_{\sigma}(D)$.
We may find some $m$ such that $mH$ is very ample, and that $(m-1)H+\sum c_i C_i$ is ample for any $c_i\in[0,1]$.
By Bertini's theorem, we can find an irreducible, reduced and effective divisor $H_1$ that is rationally equivalent to $mH$.
Consider the following short exact sequence
\begin{multline*}
0\rightarrow
\mathscr{O}_{X}((\sum\lfloor t a_i\rfloor C_i) +(1-m)H)
\rightarrow
\mathscr{O}_{X}((\sum\lfloor t a_i\rfloor C_i) +H)\\
\rightarrow
\mathscr{O}_{X}((\sum\lfloor t a_i\rfloor C_i) +H)|_{H_1}
\rightarrow
0.
\end{multline*}
We may find a sequence of $t_j\rightarrow\infty$ and some $c>0$ such that
$h^{0}(X,\mathscr{O}_{X}((\sum\lfloor t_j a_i\rfloor C_i) +H))\geq ct_j^n$.
But by (\ref{it: range}), $h^{0}(H_1,\mathscr{O}_{X}((\sum\lfloor t_j a_i\rfloor C_i) +H)|_{H_1})< ct^n$ for $t\gg 0$.
These imply that $(\sum\lfloor t_j a_i\rfloor C_i) -(m-1)H$ is effective for $j\gg 0$.
Hence,
$t_jD= 
((\sum\lfloor t_j a_i\rfloor C_i) -(m-1)H)+((m-1)H+\sum (t_j a_i-\lfloor t_j a_i\rfloor) C_i)$ is big.
\end{proof}

\subsection{Ample and Locally ample subvarieties}
In this subsection, we shall first recall the definition of an ample subsubscheme, which was introduced by Ottem in \cite{Ott12}.
Then we introduce the notion of a locally ample subscheme, which generalizes the notion of a subvariety that is l.c.i. in the ambient variety with ample normal bundle.
\begin{definition}[Ample subscheme {\cite[Definition 3.1]{Ott12}}]
Let $X$ be a projective scheme of dimension $n$ and let $Y$ be a subscheme of $X$ of codimension $r$.
Let $E$ be the exceptional divisor of the blowup of $X$ along $Y$.
We say that $Y$ is an \textit{ample subscheme} of $X$ if $E$ is $(r-1)$-ample.
\end{definition}

This notion of ample subschemes indeed generalize the notion of an ample divisor naturally.
For example, if $Y$ is a smooth ample subvariety of a smooth projective variety,
then the Lefschetz hyperplane theorem with rational coefficient holds:
the natural maps
\[
H^{i}(X,\mathbb{Q})\rightarrow H^{i}(Y,\mathbb{Q})
\]
are isomorphisms for $i<n-r$ and is injective for $i=n-r$ \cite[Corollary 5.3]{Ott12}.

From the point of view of intersection theory,
we also know that if $Y$ is an l.c.i. ample subvariety of a projective variety $X$. Then for any subvariety $Z$ of $X$ of complementary dimension, $Y\cdot Z>0$ \cite{FL83}.

For more about ample subvarieties, c.f. \cite{Ott12}.

\begin{definition}[Locally ample subscheme]
Let $X$ be a projective scheme of dimension $n$ and let $Y$ be a subscheme of $X$ of codimension $r$.
Let $E$ be the exceptional divisor of the blowup of $X$ along $Y$.
We say that $Y$ is an \textit{locally ample subscheme} of $X$ if $\mathscr{O}_E(E)$ is $(r-1)$-ample.
\end{definition}
The following proposition shows that the concept of a locally ample subscheme generalizes the notion of an l.c.i. subvariety with ample normal bundle.
\begin{proposition}\cite[Corollary 4.3]{Ott12}
Let $X$ be a projective scheme of dimension $n$ and let $Y$ be a l.c.i. subscheme of $X$ of codimension $r$.
Then $Y$ has ample normal bundle if and only if $Y$ is locally ample in $X$.
\end{proposition}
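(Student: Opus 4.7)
The plan is to exploit the l.c.i.\ assumption to make the exceptional divisor explicit as a projective bundle over $Y$, and then to translate the $(r-1)$-ampleness of $\mathscr{O}_E(E)$ into Serre's cohomological criterion for ampleness of the normal bundle $N = N_{Y/X}$. Since $Y$ is l.c.i.\ of codimension $r$, the conormal sheaf $\mathcal{I}/\mathcal{I}^2$ is locally free of rank $r$ and $\mathrm{Sym}^{\bullet}(\mathcal{I}/\mathcal{I}^2) \to \bigoplus_{n} \mathcal{I}^n/\mathcal{I}^{n+1}$ is an isomorphism, so $E \cong \mathbb{P}(N^\vee)$ with $\mathscr{O}_E(E) \cong \mathscr{O}_{\mathbb{P}(N^\vee)}(-1)$. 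Let $\pi : E \to Y$ denote the resulting $\mathbb{P}^{r-1}$-bundle.

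Next I would compute the higher direct images along $\pi$ of powers of $\mathscr{O}_E(E)$. Relative Serre duality applied to $\omega_\pi = \mathscr{O}_{\mathbb{P}(N^\vee)}(-r) \otimes \pi^*(\det N)^{-1}$ gives, for every $m \geq r$,
\[
R^q \pi_* \mathscr{O}_{\mathbb{P}(N^\vee)}(-m) = 0 \ \text{ for } q \neq r-1, \qquad R^{r-1}\pi_* \mathscr{O}_{\mathbb{P}(N^\vee)}(-m) \cong S^{m-r}(N) \otimes \det N.
\]
For an arbitrary coherent $\mathscr{F}$ on $E$, I would reduce to the pullback case by using that twisting by a sufficiently high power of the $\pi$-relatively ample $\mathscr{O}(1)$ makes $\mathscr{F}$ $\pi$-acyclic and generated along the fibers, yielding a resolution of $\mathscr{F}$ by direct sums of sheaves $\mathscr{O}(-k) \otimes \pi^*\mathscr{G}$.

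Then the projection formula together with the collapse of the Leray spectral sequence to a single row gives, for $\mathscr{G}$ coherent on $Y$ and $m \geq r$,
\[
H^i\bigl(E,\, \pi^*\mathscr{G} \otimes \mathscr{O}_{\mathbb{P}(N^\vee)}(-m)\bigr) \cong H^{\,i-(r-1)}\bigl(Y,\, \mathscr{G} \otimes S^{m-r}(N) \otimes \det N\bigr).
\]
Consequently, $\mathscr{O}_E(E)$ being $(r-1)$-ample (vanishing of the left side for $i > r-1$ and $m \gg 0$) is equivalent to $H^j(Y, \mathscr{G} \otimes S^{m-r}(N) \otimes \det N) = 0$ for all $j > 0$ and $m \gg 0$. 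Since twisting by the fixed line bundle $\det N$ is asymptotically irrelevant, this is precisely the Serre cohomological characterization of ampleness of the vector bundle $N$, so both implications follow simultaneously.

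The main obstacle will be the reduction from a general coherent $\mathscr{F}$ on $E$ to pullbacks $\pi^*\mathscr{G}$: one has to arrange, using filtrations by relatively ample twists, that only the $q = r-1$ row of the Leray spectral sequence contributes asymptotically, and verify that the shift in cohomological degree lines up exactly with the threshold $q = r-1$ built into the definition of $q$-ampleness. Once this bookkeeping is settled, the equivalence is driven cleanly by the pushforward identity $R^{r-1}\pi_* \mathscr{O}(-m) = S^{m-r}(N) \otimes \det N$.
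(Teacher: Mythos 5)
Your argument is correct and is essentially the proof of the cited result \cite{Ott12}*{Corollary 4.3}; the paper does not reprove this proposition but simply quotes Ottem, whose argument likewise identifies $E$ with $\mathbb{P}(N^{\vee})$, computes $R^{r-1}\pi_{*}\mathscr{O}_{E}(mE)\cong S^{m-r}(N)\otimes\det N$ (using characteristic zero to identify $S^{m-r}(N^{\vee})^{\vee}$ with $S^{m-r}(N)$), and invokes Serre's cohomological criterion for ampleness of a vector bundle. The one step you defer---resolving an arbitrary coherent sheaf on $E$ by finite direct sums of sheaves $\pi^{*}\mathscr{G}\otimes\mathscr{O}_{E}(kE)$ and dimension-shifting---is standard and is needed only for the implication ``$N$ ample $\Rightarrow$ $\mathscr{O}_{E}(E)$ is $(r-1)$-ample''; the converse already follows from testing the definition on pullback sheaves.
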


\begin{proposition}[Pullback]\label{prop:pullback}
Let $X$ be a projective scheme and let $Y$ be a locally ample subscheme of $X$ of codimension $r$.
Let $Z$ be a closed subscheme of $X$.
Suppose $Y\cap Z$ has codimension $r$ in $Z$.
Then $Y\cap Z$ is locally ample in $Z$.
\end{proposition}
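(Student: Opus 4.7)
The plan is to identify the blowup of $Z$ along $Y \cap Z$ with the strict transform of $Z$ inside $\tilde X := \Bl_Y X$, show that under this identification the exceptional divisor of $\tilde Z \to Z$ carries the restricted line bundle $\mathscr{O}_E(E)|_{E_Z}$, and then apply the principle that restriction of a $q$-ample line bundle to a closed subscheme remains $q$-ample.

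First I would let $\pi \colon \tilde X \to X$ denote the blowup along $Y$, so that $\mathscr{I}_Y \cdot \mathscr{O}_{\tilde X} = \mathscr{O}_{\tilde X}(-E)$, and let $\tilde Z$ be the strict transform of $Z$ in $\tilde X$. By construction $\tilde Z$ is the blowup of $Z$ along the pulled-back ideal $\mathscr{I}_Y \cdot \mathscr{O}_Z = \mathscr{I}_{Y \cap Z, Z}$. The hypothesis that $Y \cap Z$ has codimension $r$ in $Z$ is precisely what is needed to ensure that this pullback ideal defines $Y \cap Z$ as an honest codimension-$r$ subscheme (no component of $Z$ is swallowed into $Y$) and that $\tilde Z \to Z$ coincides with $\Bl_{Y \cap Z} Z$, with exceptional divisor $E_Z = E \cap \tilde Z$ (scheme-theoretic intersection) Cartier on $\tilde Z$. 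The identification $\mathscr{I}_Y \cdot \mathscr{O}_{\tilde Z} = \mathscr{O}_{\tilde X}(-E)|_{\tilde Z} = \mathscr{O}_{\tilde Z}(-E_Z)$ then yields, after further restriction to $E_Z$, the line bundle identity $\mathscr{O}_{E_Z}(E_Z) = \mathscr{O}_E(E)|_{E_Z}$.

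Next I would verify the following general principle: if $\mathscr{L}$ is a $q$-ample line bundle on a projective scheme $W$ and $\iota \colon W' \hookrightarrow W$ is a closed immersion, then $\mathscr{L}|_{W'}$ is $q$-ample on $W'$. This is immediate from the projection formula, since for any coherent sheaf $\mathscr{F}$ on $W'$ one has $H^i(W', \mathscr{F} \otimes \mathscr{L}|_{W'}^{\otimes m}) = H^i(W, \iota_* \mathscr{F} \otimes \mathscr{L}^{\otimes m})$, and the right-hand side vanishes for $i > q$ and $m \gg 0$ by $q$-ampleness of $\mathscr{L}$ applied to $\iota_* \mathscr{F}$. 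Applying this to $W = E$, $W' = E_Z$, and $\mathscr{L} = \mathscr{O}_E(E)$---which is $(r-1)$-ample because $Y$ is locally ample in $X$---one concludes that $\mathscr{O}_{E_Z}(E_Z)$ is $(r-1)$-ample, so $Y \cap Z$ is locally ample in $Z$.

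The main obstacle lies in the first step: pinning down precisely how the codimension hypothesis forces the strict transform $\tilde Z$ in $\tilde X$ to coincide with $\Bl_{Y \cap Z} Z$ and why the exceptional divisor $E_Z$ arises as the scheme-theoretic intersection $E \cap \tilde Z$. Without this hypothesis the strict transform could be contained in $E$, or meet $E$ in the ``wrong'' scheme, and the crucial identification $\mathscr{O}_{E_Z}(E_Z) = \mathscr{O}_E(E)|_{E_Z}$ would fail. The remainder of the argument is purely formal, using only the projection formula and the definition of $q$-ampleness.
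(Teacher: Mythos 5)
Your argument is correct and is essentially the paper's own proof: the paper likewise identifies $\Bl_{Y\cap Z}Z$ with the strict transform of $Z$ in $\Bl_Y X$, notes that its exceptional divisor $E_Z$ is the restriction of $E$ so that $\mathscr{O}_{E_Z}(E_Z)=\mathscr{O}_E(E)|_{E_Z}$, and concludes by the fact that the restriction of an $(r-1)$-ample line bundle to a closed subscheme is $(r-1)$-ample. The only difference is that you spell out the projection-formula justification and the strict-transform identification that the paper leaves implicit.
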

\begin{proof}
Indeed, by the universal property of blowup, we have the following commutative diagram
\[
\xymatrix{
\Bl_{Y\cap Z} Z \ar@{^{(}->}[r] \ar[d]_{\pi_{Z}}
& \Bl_{Y}X \ar[d]^{\pi_X}\\
Z \ar@{^{(}->}[r] & X.
}
\]

Note that the exceptional divisor of $\pi_Z$, $E_Z$, is the restriction of the exceptional divisor $E$ of $\pi_X$.
If $\mathscr{O}_E(E)$ is $(r-1)$-ample, so is $\mathscr{O}_{E_Z}(E)$. 
\end{proof}
We now show that the notion of locally ample subscheme satisfies the transitivity property.
The proof is a bit involved but is very similar to the proof of transitivity of ample subschemes \cite[Theorem 4.10]{Lau16},
it will be given in the appendix.
The following theorem on transitivity hints that the notion of locally ample subvarieties is a reasonable generalization of the notion of subvarieties with ample normal bundle.
However, we won't need it later.
\begin{theorem}[Transitivity of locally ample subschemes]\label{thm:transitivity}
Let $Y$ be a locally ample subscheme of $X$ of codimension $r_1$ and let $Z$ be a locally ample subscheme of $Y$ of codimension $r_2$.
Then $Z$ is a locally ample subscheme of $X$ of codimension $r_1 + r_2$.
\end{theorem}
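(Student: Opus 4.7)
The plan is to construct a common dominating blowup of $\Bl_Y X$ and $\Bl_Z X$, then control $q$-ampleness on its exceptional divisor via Totaro's criterion. Let $\sigma_1: \tilde{X} = \Bl_Y X \to X$ with exceptional divisor $E$, so $\mathscr{O}_E(E)$ is $(r_1-1)$-ample by hypothesis. Set $V := \sigma_1^{-1}(Z) \subset E$, a closed subscheme of codimension $r_2$ in $E$ (hence codimension $r_2+1$ in $\tilde X$), and form $W := \Bl_V \tilde{X}$ with morphism $\sigma: W \to \tilde X$ and exceptional divisor $F$. Since $\mathscr{I}_V = \mathscr{I}_Z \cdot \mathscr{O}_{\tilde X}$, the ideal $\mathscr{I}_Z \cdot \mathscr{O}_W = \mathscr{I}_V \cdot \mathscr{O}_W = \mathscr{O}_W(-F)$ is invertible, and the universal property of the blowup produces a factorization $\rho: W \to \hat X := \Bl_Z X$ with $\rho^* E_Z^X = F$ as Cartier divisors. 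Consequently $(\rho|_F)^* \mathscr{O}_{E_Z^X}(E_Z^X) \cong \mathscr{O}_F(F)$, reducing the theorem to two tasks: (i) show $\mathscr{O}_F(F)$ is $(r_1+r_2-1)$-ample, and (ii) descend this along the surjective proper morphism $\rho|_F: F \to E_Z^X$.

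For task (i), I would analyze $F$ as the projectivized normal cone of $V$ in $\tilde X$, together with the decomposition $\sigma^* E = F + E'$ on $W$, where $E'$ is the strict transform of $E$. By the base-change description $V = E \times_Y Z$, the blowup $E' = \Bl_V E$ is compatible with $\Bl_Z Y$: there is a natural morphism $E' \to \Bl_Z Y$ (a $\mathbb{P}^{r_1-1}$-bundle in the lci case, more generally a projectivized normal cone), and its exceptional divisor $F_E := F \cap E'$ maps onto $E_Z^Y$ with $\mathscr{O}_{F_E}(F_E)$ pulled back from the $(r_2-1)$-ample line bundle $\mathscr{O}_{E_Z^Y}(E_Z^Y)$. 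Combined with the pullback $\sigma^* \mathscr{O}_E(E)|_F$ carrying $(r_1-1)$-ampleness in the transverse direction, and invoking the additivity of $q$-ampleness from Theorem \ref{thm:open} together with a careful analysis along the fibers of $\sigma|_F: F \to V$ (on which $\mathscr{O}_F(F)$ restricts to a tautological $\mathscr{O}(-1)$), one should obtain the desired $(r_1+r_2-1)$-ampleness of $\mathscr{O}_F(F)$, after a small ample perturbation.

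For task (ii), I would invoke Totaro's Koszul-ample criterion (Theorem \ref{thm:q-T-ample}) to translate $(r_1+r_2-1)$-ampleness of $\mathscr{O}_{E_Z^X}(E_Z^X)$ into finitely many cohomological vanishings twisted by a fixed Koszul-ample line bundle on $E_Z^X$. Pulling back to $F$ and running the Leray spectral sequence for $\rho|_F$ reduces these to vanishings on $F$ established in (i), modulo controlling the higher direct images $R^i(\rho|_F)_* \mathscr{O}_F$. The main obstacle will be precisely here: when $Z$ fails to be lci in $X$, the morphism $\rho|_F$ need not be flat and the contraction of the strict transform $E'$ can produce torsion in the direct images, polluting the vanishing. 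I would address this by following \cite[Theorem 4.10]{Lau16}, using the openness of the $q$-ample cone (Theorem \ref{thm:open}) to perturb by a small ample class and reduce to clean vanishings that survive the pushforward.
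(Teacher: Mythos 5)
Your global architecture is the same as the paper's: pass to the double blow-up $W=\Bl_{\mathscr{I}_Z\cdot\mathscr{O}_{\tilde X}}\tilde X$, use $\rho^*E_Z^X=F$ to reduce the statement to a positivity/vanishing assertion for $\mathscr{O}_F(mF)$, and descend along $\rho|_F$. The genuine gap is in your task (i), which is precisely the hard part of the theorem, and the mechanism you offer there does not work. Pulling back a $q$-ample line bundle along a morphism with positive-dimensional fibres does not preserve $q$-ampleness, so neither of your two inputs is literally available: what one can actually prove (using that minus the relevant exceptional divisor is relatively ample, i.e.\ \cite[Proposition 2.8]{Lau16}) is that $(F+E'-\epsilon F)|_{F}$ is $(r_1-1)$-ample and that $(F-\delta E')|_{F\cap E'}$ is $(r_2-1)$-ample for $0<\epsilon,\delta\ll1$. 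Neither class is a multiple of $F|_F$, and Totaro's additivity of $q$-ampleness cannot be ``invoked'' to combine them: the second class lives only on the subscheme $F\cap E'$, not on $F$, and the discrepancy between $mF|_F$ and the $(r_1-1)$-ample class is a \emph{negative} multiple of the effective divisor $E'|_F$, which no small ample perturbation can absorb. The actual argument, which occupies most of the paper's proof, is a cohomological staircase: one fixes $k$ so that $kF-E'$ is $(r_2-1)$-ample on $F\cap E'$ and $kF+(k+1)E'$ is $(r_1-1)$-ample on $F$, writes $m_1E'+m_2F$ as a nonnegative combination of these two classes plus a bounded remainder, and uses the exact sequence
\[
0\rightarrow\mathscr{O}_F(D-E')\rightarrow\mathscr{O}_F(D)\rightarrow\mathscr{O}_{F\cap E'}(D)\rightarrow0
\]
together with the vanishing on $F\cap E'$ to show that $H^{i}(F,\mathscr{O}_F(m_2F+m_1E')\otimes\rho^*\mathscr{O}(-lH))$ is unchanged as $m_1$ increases (for $i>r_1+r_2-1$, in the relevant range), until the class is dominated by the $(r_1-1)$-ample one and the group vanishes. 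Your sketch contains no substitute for this induction, and the hedge ``one should obtain the desired ampleness'' sits exactly where the proof has to happen.

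By contrast, your task (ii) is a non-issue once (i) is in place: the paper does not compute $R^{i}(\rho|_F)_*\mathscr{O}_F$ or worry about flatness, but instead cites a descent lemma \cite[Lemma 4.9]{Lau16} which deduces $(r_1+r_2-1)$-ampleness of $\mathscr{O}_{E_Z^X}(E_Z^X)$ from the vanishing, for every $l\geq0$, of $H^{i}(F,\mathscr{O}_F(mF)\otimes\rho^*\mathscr{O}(-lH))$ for $i>r_1+r_2-1$ and $m\gg0$, using only that $\rho$ has fibre dimension at most $r_1-1$ (which follows from local ampleness of $Y$ via \cite[Proposition 4.6]{Lau16}). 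Your proposed fix of ``perturbing by a small ample class'' would not control torsion in higher direct images in any case; the correct move is to prove the twisted vanishing upstairs for all $l$ and let the descent lemma do the rest.
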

\begin{corollary}[Intersection of locally ample subschemes]
Let $X$ be a projective scheme.
Let $Y$ and $Z$ be locally ample subschemes of $X$ of codimension $r$ and $s$ respectively and that $Y\cap Z$ is of codimension $r+s$ in $X$.
Then $Y\cap Z$ is locally ample in $X$.
\end{corollary}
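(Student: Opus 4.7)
The plan is to combine the two previously established structural properties of locally ample subschemes: the pullback result (Proposition \ref{prop:pullback}) and the transitivity theorem (Theorem \ref{thm:transitivity}). The key point is that intersecting with $Z$ keeps $Y$ locally ample inside $Z$, which then lets us chain $Y\cap Z\subset Z\subset X$ using transitivity.

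First, I would verify the codimension hypothesis needed to apply Proposition \ref{prop:pullback}. Since $Z$ is a closed subscheme of $X$ of codimension $s$ and $Y\cap Z$ has codimension $r+s$ in $X$, we get
\[
\operatorname{codim}_{Z}(Y\cap Z) = \dim Z - \dim(Y\cap Z) = (\dim X - s) - (\dim X - r - s) = r,
\]
which matches the codimension of $Y$ in $X$. Thus Proposition \ref{prop:pullback} applies with the roles of $Y$ and $Z$ as stated, and shows that $Y\cap Z$ is a locally ample subscheme of $Z$ of codimension $r$.

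Next I would invoke Theorem \ref{thm:transitivity}. By hypothesis, $Z$ is a locally ample subscheme of $X$ of codimension $s$, and by the previous step $Y\cap Z$ is a locally ample subscheme of $Z$ of codimension $r$. Transitivity then yields that $Y\cap Z$ is a locally ample subscheme of $X$ of codimension $r+s$, which is exactly what we need.

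There is essentially no obstacle: the entire content of the corollary is already packaged in the two preceding results, so the only thing to check carefully is the codimension bookkeeping. (If one wanted, one could also symmetrize the argument by swapping the roles of $Y$ and $Z$, noting that $Y\cap Z$ has codimension $s$ in $Y$, but either order works.)
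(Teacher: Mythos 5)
Your proof is correct and follows exactly the paper's argument: apply Proposition \ref{prop:pullback} to see that $Y\cap Z$ is locally ample of codimension $r$ in $Z$, then conclude by Theorem \ref{thm:transitivity} since $Z$ is locally ample of codimension $s$ in $X$. The codimension bookkeeping you spell out is the only point the paper leaves implicit.
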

\begin{proof}
By Proposition \ref{prop:pullback}, $Y\cap Z$ is locally ample in $Z$.
Hence, $Y\cap Z$ is locally ample in $X$ as well.
\end{proof}

\section{Numerical dominance}
In this section, we prove a basic fact on Nakayama's notion of numerical dominance,
which will streamline the argument in the proof of the main theorem.

Let us first start by stating the definition of numerical dominance.
\begin{definition}\cite[Definition 2.12]{Nak04}
Given two classes $\eta_1, \eta_2\in\N^{1}(X)_{\mathbf{R}}$.
We say that $\eta_1$ \textit{numerically dominates} $\eta_2$ if
for any ample divisor $A$ and for any $b\in \mathbf{R}$ there are $t_1, t_2>b$ such that $t_1 \eta_1 -t_2 \eta_2 +A$ is pseudoeffective.

We say that a class $\eta\in\N^{1}(X)_{\mathbf{R}}$ numerically dominates a closed subvariety $Y$ of $X$ if
on the blowup $\pi:\Bl_Y X \rightarrow X$, $\pi^*\eta$ numerically dominates the exceptional divisor $E$. 
\end{definition}

\begin{lemma}\label{lemma:numdom}
Let $X$ be a projective variety and
let $\eta_1,\eta_2\in\N^{1}(X)_{\mathbf{R}}$.
Then $\eta_1$ numerically dominates $\eta_2$ if and only if \underline{there exists} an ample divisor $A$ such that for any $b\in \mathbf{R}$ there are $t_1, t_2>b$ such that $t_1 \eta_1 -t_2 \eta_2 +A$ is pseudoeffective.
\end{lemma}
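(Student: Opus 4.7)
The plan is to show that the witness condition propagates from a single ample divisor $A$ to an arbitrary ample divisor $A'$, by exploiting the fact that the pseudoeffective cone is closed under addition and positive rescaling, and that any fixed $\mathbf{R}$-divisor is dominated by a large multiple of any ample $\mathbf{R}$-divisor.

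First I would fix an arbitrary ample divisor $A'$ and reduce the target condition to one involving the given $A$. Since $A'$ is ample, there exists $\lambda > 0$ such that $\lambda A' - A$ lies in the ample (in particular, pseudoeffective) cone of $\N^{1}(X)_{\mathbf{R}}$. This constant $\lambda$ depends only on $A$ and $A'$, not on $b$.

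Next, given any $b \in \mathbf{R}$, I would apply the hypothesis with the threshold $\lambda \cdot \max(b,0) + 1$ in place of $b$ to produce $t_1, t_2 > \lambda b$ with $t_1 \eta_1 - t_2 \eta_2 + A$ pseudoeffective. Adding the pseudoeffective class $\lambda A' - A$ yields
\[
t_1 \eta_1 - t_2 \eta_2 + \lambda A' \in \overline{\Eff}^{1}(X),
\]
and rescaling by $1/\lambda > 0$ (which preserves pseudoeffectivity) gives
\[
(t_1/\lambda)\,\eta_1 - (t_2/\lambda)\,\eta_2 + A' \in \overline{\Eff}^{1}(X),
\]
with $t_1/\lambda,\, t_2/\lambda > b$. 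This is exactly the witness pair required for $A'$, so $\eta_1$ numerically dominates $\eta_2$.

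The only mild subtlety, which is not really an obstacle, is the bookkeeping needed to guarantee that both rescaled parameters $t_i/\lambda$ simultaneously exceed $b$; this is handled by inflating the threshold in the hypothesis by a factor of $\lambda$. No deeper input from the theory of $q$-ample divisors or from the preceding sections is needed for this lemma.
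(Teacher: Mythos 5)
Your proof is correct and follows essentially the same route as the paper's: choose a scalar $\lambda$ (the paper uses an integer $a$) with $\lambda A' - A$ pseudoeffective, apply the hypothesis at an inflated threshold, add $\lambda A' - A$, and rescale by $1/\lambda$. The threshold bookkeeping you flag is handled correctly, so nothing further is needed.
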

\begin{proof}
Suppose the hypothesis in the lemma holds.
Given an ample divisor $A'$, choose a large enough integer $a$ such that $aA'-A$ is pseudoeffective.
Given $b>0$, take $t_1, t_2 >ab$ such that $t_1 \eta_1 -t_2 \eta_2 +A$ is pseudoeffective.
Then $\frac{t_1}{a} \eta_1 -\frac{t_2}{a} \eta_2 +A'=\frac{1}{a}(t_1 \eta_1 -t_2 \eta_2 +A)+(A'-\frac{1}{a}A)$ is pseudoeffective.
\end{proof}

Let us relate the negation of numerical dominance and vanishing of the top cohomology group.
\begin{proposition}\label{prop:topcoh}
Let $X$ be a projective variety of dimension $n$ and let $Y$ be a subvariety of $X$.
Let $E$ be the exceptional divisor on $\tilde{X}:=\Bl_Y X$, the blowup of $X$ along $Y$.
Let $D$ be a pseudoeffective $\mathbf{R}$-Cartier $\mathbf{R}$-divisor on $X$,
written as $\sum a_i C_i$, 
where $a_i\in\mathbf{R}$ and $C_i$'s are integral Cartier divisors.
Fix a $2n$-Koszul-ample line bundle $\mathscr{O}(H)$ on $\tilde{X}$.

Suppose there is some $b\in\mathbf{R}$ such that
\[
h^{n}(\tilde{X},\mathscr{O}_{\tilde{X}}(kE-(\sum\lfloor ta_i\rfloor\pi^*C_i) -(m+n+2)H))=0
\]
for all $t\in(b,+\infty)$ and for all integer $k>b$, 
where $m\in \mathbb{N}$ such that $mH+ eE-\sum c_i\pi^*C_i$ is ample for any $e,c_i\in[0,1]$ on $\tilde{X}$,
then $D$ does not numerically dominate $Y$.

On the other hand, if $D$ does not numerically dominate $Y$, then
for any divisor $B$, there is $b\in \mathbf{R}$ such that 
\[
h^{n}(\tilde{X},\mathscr{O}_{\tilde{X}}(kE-(\sum\lfloor ta_i\rfloor\pi^*C_i) -B))=0
\]
for all $t\in(b,+\infty)$ and for all integer $k>b$.
\end{proposition}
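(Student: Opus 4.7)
The plan is to reduce both directions to statements about $(n-1)$-ampleness on $\tilde X$ and then apply Totaro's machinery. Throughout, the fractional parts $\{ta_i\}\in[0,1)$ and $t_2-\lfloor t_2\rfloor\in[0,1)$ sweep out bounded subsets of $\N^{1}(\tilde X)_{\mathbf{R}}$, so they can be absorbed into fixed ample classes by compactness and openness of the ample cone (Theorem \ref{thm:open}). This is the central technical device on both sides.

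For the first implication, I would start from the hypothesis and apply Theorem \ref{thm:q-T-ample} with $N=1$: for every $t,k>b$ the line bundle
\[
\mathscr{L}_{t,k}:=\mathscr{O}_{\tilde X}(kE-\textstyle\sum\lfloor ta_i\rfloor\pi^*C_i-A+(n+1)H)
\]
is $(n-1)$-ample. Since the hypothesis on $A$ says that $A-(n+1)H+eE-\sum c_i\pi^*C_i$ is ample for every $(e,c_i)$ in the compact box $[0,1]^{m+1}$, openness of the ample cone supplies a small ample $\mathbf{R}$-divisor $A'$ such that
\[
A''_{t_1,t_2}:=A-(n+1)H+(t_2-\lfloor t_2\rfloor)E-\textstyle\sum\{t_1a_i\}\pi^*C_i-A'
\]
is ample for all $t_1,t_2$. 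A direct rewrite yields
\[
t_2E-t_1\pi^*D-A'\equiv\mathscr{L}_{t_1,\lfloor t_2\rfloor}+A''_{t_1,t_2},
\]
so the left-hand side is $(n-1)$-ample plus ample, hence $(n-1)$-ample as an $\mathbf{R}$-Cartier $\mathbf{R}$-divisor (Theorem \ref{thm:open}). The $\mathbf{R}$-Cartier version of Theorem \ref{theorem: n-1 ample} (obtained by combining the definition of $(n-1)$-ample $\mathbf{R}$-divisors with the line-bundle statement: if $\xi\equiv cL+A''$ with $L$ $(n-1)$-ample and $A''$ ample, a pseudoeffective $-\xi$ forces $-L$ pseudoeffective) then yields that $t_1\pi^*D-t_2E+A'$ is not pseudoeffective for $t_1,t_2>b+1$, and Lemma \ref{lemma:numdom} concludes that $D$ does not numerically dominate $Y$.

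For the converse, I would pass from top cohomology to $H^0$ via the dualizing sheaf. By Lemma \ref{lemma:dualizing} there exists $j$ with $\omega_{\tilde X}\hookrightarrow\mathscr{O}_{\tilde X}(jH)$, which combined with Serre duality gives $h^{n}(\tilde X,\mathscr{F})\leq h^{0}(\tilde X,\mathscr{F}^{\vee}\otimes\mathscr{O}_{\tilde X}(jH))$ for any line bundle $\mathscr{F}$. Taking $\mathscr{F}=\mathscr{O}_{\tilde X}(kE-\sum\lfloor ta_i\rfloor\pi^*C_i-B)$, the target vanishing reduces to showing that the class
\[
t\pi^*D-kE+(B+jH-\textstyle\sum\{ta_i\}\pi^*C_i)
\]
is not pseudoeffective for $t,k$ large. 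By boundedness of the unit box I can pick an ample $A$ such that $A-B-jH+\sum c_i\pi^*C_i$ is ample for every $c_i\in[0,1]$. The non-dominance assumption and Lemma \ref{lemma:numdom} produce $b$ such that $t\pi^*D-kE+A$ is not pseudoeffective for all $t,k>b$; writing the class above as $(t\pi^*D-kE+A)-(A-B-jH+\sum\{ta_i\}\pi^*C_i)$ exhibits it as (not pseudoeffective) minus (pseudoeffective), hence not pseudoeffective, completing the vanishing.

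The main obstacle I anticipate is engineering the decomposition in the first implication: the precise ampleness hypothesis on $A-(n+1)H+eE-\sum c_i\pi^*C_i$ over the entire unit box is exactly what is needed to absorb both the discrepancy $t_2-\lfloor t_2\rfloor$ and the fractional parts $\{t_1a_i\}$ into a single ample remainder, while using $N=1$ in Totaro's $q$-T-ampleness criterion is what permits the identification with a single integral $(n-1)$-ample line bundle rather than forcing a passage to high tensor powers.
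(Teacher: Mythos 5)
Your proposal is correct and follows essentially the same route as the paper: $q$-T-ampleness with $N=1$ to convert the $h^n$ vanishing into $(n-1)$-ampleness, absorption of the fractional parts into the ample class $A-(n+1)H$, Totaro's characterization of $(n-1)$-ampleness to rule out pseudoeffectivity, and Serre duality plus the embedding $\omega_{\tilde X}\hookrightarrow\mathscr{O}(jH)$ together with Lemma \ref{lemma:numdom} for the converse. If anything, your handling of the fractional parts $\{ta_i\}\pi^*C_i$ in the converse direction is more explicit than the paper's.
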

\begin{proof}
For the first statement,
by the hypothesis,
\[
h^{n}(\tilde{X},\mathscr{O}_{\tilde{X}}(kE-(\sum\lfloor ta_i\rfloor\pi^*C_i) -(m+1)H)\otimes\mathscr{O}_{\tilde{X}}(-(n+1)H))=0
\]
for $k,t>b$, $k\in\mathbb{Z}$.
Thus, by Theorem \ref{thm:q-T-ample},
$kE-(\sum\lfloor ta_i\rfloor\pi^*C_i) -(m+1)H$ is $(n-1)$-ample for $k,t>b$, $k\in\mathbb{Z}$.
For $t_1, t_2>b$, we can write $t_2 E - t_1 \pi^*D- H
=(\lfloor t_2 \rfloor E- (\sum\lfloor t_1a_i\rfloor \pi^*C_i) -  (m+1)H)+ 
(mH+(t_2-\lfloor t_2 \rfloor )E-\sum(t_1 a_i-\lfloor t_1 a_i\rfloor)\pi^*C_i)$
and observe that the first term is $(n-1)$-ample and the second term is ample.
It follows that $t_2 E - t_1 \pi^*D- H$ is $(n-1)$-ample for all $t_1, t_2>b$.
Thus, $t_1 \pi^*D- t_2 E + H$ is not pseudoeffective for all $t_1, t_2>b$.
This proves the first assertion.

For the second statement,
for sufficiently large $l$, we can embed $\omega_{\tilde{X}}\hookrightarrow \mathscr{O}(lH)$ (Lemma \ref{lemma:dualizing}).
We may also assume that $B+lH$ is ample.
Take $m\in \mathbb{N}$ such that
$mH+\sum c_iC_i$ is ample for any $c_i\in [0,1]$.
By Lemma \ref{lemma:numdom}, there is a $b$ such that $t_1\pi^*D - t_2 E+B+(l+m)H$ is not pseudoeffective for $t_1, t_2>b$.
Thus,
for $k,t>b$ and $k\in\mathbb{Z}$,
\begin{align*}
&h^{n}(\tilde{X},\mathscr{O}_{\tilde{X}}(kE-(\sum \lfloor ta_i\rfloor\pi^*C_i) -B))&\\
&=h^{0}(\tilde{X},\omega_{\tilde{X}}\otimes\mathscr{O}_{\tilde{X}}((\sum \lfloor ta_i\rfloor\pi^*C_i) -kE +B)) &(\text{Duality})\\
&\leq
h^{0}(\tilde{X},\mathscr{O}_{\tilde{X}}((\sum \lfloor ta_i\rfloor\pi^*C_i) -kE +B+lH)) &(\omega_{\tilde{X}}\hookrightarrow \mathscr{O}(lH))\\
\end{align*}
Writing
\begin{multline*}
(\sum \lfloor ta_i\rfloor\pi^*C_i) -kE +B+lH\\
=[t\pi^*D -kE +B+(l+m)H]
-[mH+\sum (ta_i- \lfloor ta_i \rfloor) \pi^*C_i]
\end{multline*}
and observe that the first term on the right hand side is not pseudoeffective while the second term is ample,
we see that
$h^{0}(\tilde{X},\mathscr{O}_{\tilde{X}}((\sum \lfloor ta_i\rfloor\pi^*C_i) -kE +B+lH))=0$.

\end{proof}
\begin{remark}
In general, the divisor $\sum\lfloor ta_i\rfloor\pi^*C_i$ appearing in the statement of Proposition \ref{prop:topcoh} is different from the integral part $\lfloor t\pi^*D\rfloor$ of $t\pi^{*}D$, 
which is only a Weil divisor.
Note that the expression $\sum\lfloor ta_i\rfloor\pi^*C_i$ depends not only on $D$ and $t$ but also on the decomposition $D = \sum a_i C_i$ expressing $D$ as an $\mathbb{R}$-linear combination of integral Cartier divisors.
\end{remark}

\section{Proof of Theorem A}
We are now ready to demonstrate how the notion of numerical dominance comes into the picture.

\begin{proposition}\label{prop:num dom}
Let $X$ be a projective variety of dimension $n$,
let $Y$ be a locally ample subvariety of codimension $r$ of $X$ and
let $\eta\in\N^{1}(X)_{\mathbf{R}}$ be a pseudoeffective class
such that $\eta|_{Y}$ is not big.
Then $\eta$ does not numerically dominate $Y$.
\end{proposition}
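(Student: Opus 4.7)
\begin{hproof}
The plan is to verify the hypothesis of Proposition \ref{prop:topcoh}. Write $\eta = [\sum a_i C_i]$ with integral Cartier divisors $C_i$, set $D_t = \sum \lfloor ta_i \rfloor \pi^* C_i$, and choose an ample divisor $A$ on $\tilde{X}$ large enough to satisfy the technical ampleness conditions of Proposition \ref{prop:topcoh}. The aim is then to exhibit a bound $b \in \mathbf{R}$ such that
\[
h^n(\tilde{X}, \mathscr{O}_{\tilde{X}}(kE - D_t - A)) = 0 \quad \text{for all integer } k > b \text{ and all } t \in (b, +\infty).
\]

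The first step extracts positivity on $Y$ from the non-bigness hypothesis. Since $\eta|_Y$ is pseudoeffective by the main result of \cite{Lau16} and non-big by assumption, it lies on the boundary $\partial\overline{\Eff}^{1}(Y)$. Hence for any ample divisor $A_Y$ on $Y$ and any $\epsilon > 0$, the class $\eta|_Y - \epsilon A_Y$ fails to be pseudoeffective, and Theorem \ref{theorem: n-1 ample} therefore asserts that $\epsilon A_Y - \eta|_Y$ is $(n-r-1)$-ample on $Y$. Absorbing the bounded perturbation $\sum\{ta_i\} C_i|_Y$ coming from the rounding via openness of the $(n-r-1)$-ample cone (Theorem \ref{thm:open}), one deduces that $\alpha t A_Y - D_t^\flat|_Y$ is $(n-r-1)$-ample on $Y$ for all $t$ beyond some threshold and any fixed $\alpha > 0$, where $D_t^\flat = \sum\lfloor ta_i\rfloor C_i$. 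Pulling back along $\pi|_E \colon E \to Y$, whose fibres have dimension $r-1$, upgrades this to an $(n-2)$-ample class on $E$.

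Combining this with the locally ample hypothesis, which asserts that $\mathscr{O}_E(E)$ is $(r-1)$-ample, and balancing the two positivities through a careful decomposition, the central claim is the uniform vanishing
\[
H^{n-1}(E, \mathscr{O}_E((kE - D_t - A)|_E)) = 0 \quad \text{for all integer } k > b \text{ and all } t > b.
\]
The restriction exact sequence
\[
0 \to \mathscr{O}_{\tilde{X}}((k-1)E - D_t - A) \to \mathscr{O}_{\tilde{X}}(kE - D_t - A) \to \mathscr{O}_E((kE - D_t - A)|_E) \to 0,
\]
coupled with $\dim E = n-1$ (so $H^n(E,-) = 0$), then supplies a surjection $H^n(\tilde{X}, (k-1)E - D_t - A) \twoheadrightarrow H^n(\tilde{X}, kE - D_t - A)$ whose kernel is the image of the preceding $H^{n-1}$. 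Hence these $H^n$ groups are constant in integer $k > b$ at each fixed $t > b$. On the other hand, for fixed $t$ they vanish for $k$ sufficiently large, by Serre duality (using the dualizing sheaf $\omega_{\tilde{X}}$) together with the embedding $\omega_{\tilde{X}} \hookrightarrow \mathscr{O}_{\tilde{X}}(lH)$ from Lemma \ref{lemma:dualizing}: this cohomology is bounded above by $H^0(\tilde{X}, \mathscr{O}_{\tilde{X}}(-kE + D_t + A + lH))$, and no nonzero global section of a coherent sheaf on the integral scheme $\tilde{X}$ can vanish to arbitrarily high order along the irreducible hypersurface $E$. These two observations force $h^n(\tilde{X}, \mathscr{O}_{\tilde{X}}(kE - D_t - A)) = 0$ uniformly for $k, t > b$, and Proposition \ref{prop:topcoh} then delivers the conclusion.

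The main obstacle is the uniform $H^{n-1}$-vanishing on $E$: a naive application of the sum formula in Theorem \ref{thm:open} to the $(r-1)$-ample and $(n-2)$-ample contributions gives only $(r-1)+(n-2) = (n+r-3)$-ampleness, which is too weak for $r \geq 2$. The correct approach must instead use that the pulled-back $(n-2)$-ample direction from $Y$ already controls the required vanishing, and exploit openness and scaling invariance of the $q$-ample cones (in the spirit of Theorem \ref{thm:transitivity}) to absorb the remaining $k E|_E$ and $-A|_E$ contributions uniformly in both parameters.
\end{hproof}
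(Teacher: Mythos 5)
Your overall architecture matches the paper's: reduce to the cohomological criterion of Proposition \ref{prop:topcoh}, establish a uniform vanishing of $H^{n-1}$ on $E$ for the twists $kE-\sum\lfloor ta_i\rfloor\pi^*C_i-A$, and propagate it to $H^n(\tilde{X},\cdot)$ via the restriction sequence together with vanishing for $k\gg 0$ at fixed $t$ (your ``sections cannot vanish to arbitrary order along $E$'' argument is a fine substitute for the paper's appeal to the $(n-1)$-ampleness of $E$). The problem is that the one genuinely substantive step --- the uniform $H^{n-1}$-vanishing on $E$ --- is exactly the step you leave open. As you yourself observe in the last paragraph, pulling the $(n-r-1)$-ample class $\epsilon A_Y-\eta|_Y$ back along $\pi|_E\colon E\to Y$ and paying the fibre dimension $r-1$ yields only an $(n-2)$-ample class on $E$, and adding the $(r-1)$-ample contribution $kE|_E$ then gives $(n+r-3)$-ampleness, which is useless on the $(n-1)$-dimensional $E$ once $r\geq 2$. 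Acknowledging an obstacle and asserting that ``the correct approach must instead'' overcome it is not a proof; as submitted, the argument does not establish the proposition for $r\geq 2$.

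The missing idea, which is the heart of the paper's proof, is that the pullback does \emph{not} have to lose $r-1$ degrees of amplitude: since $-E$ is $\pi|_E$-ample, \cite[Proposition 2.8]{Lau16} shows that $\pi^*(-D)|_E$ is still $(n-r-1)$-\emph{almost} ample on $E$ (one corrects by a small multiple of the relatively ample $-E|_E$ plus a small multiple of $\pi^*A_Y|_E$, which together form an ample class on $E$). The small ample correction needed for ``almost'' is then absorbed into the other summand $\bigl(kE+\sum\{ta_i\}\pi^*C_i+B-(l+j)H\bigr)|_{E'}$, which remains $(r-1)$-ample for $k\geq k_0'$ by openness of the $(r-1)$-ample cone (Theorem \ref{thm:open}). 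Totaro's sum formula then gives $(r-1)+(n-r-1)=n-2$, exactly the threshold needed, and Theorem \ref{theorem: n-1 ample} (applied after reducing to the irreducible components $E'$ of $E$, resolving $\mathscr{F}$ by line bundles, and dualizing via Lemma \ref{lemma:dualizing} --- a reduction you also skip, and which matters because $E$ need not be integral) yields the required $h^{0}$-vanishing. Without this use of the relative ampleness of $-E$ to preserve the $(n-r-1)$-amplitude under pullback, your decomposition cannot be made to close.
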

\begin{proof}
Let $\tilde{X}$ be the blowup of $X$ along $Y$, with exceptional divisor $E$.
We fix a Koszul-ample line bundle $\mathscr{O}_{\tilde{X}}(H)$.
Take $D=\sum a_i C_i$ to be an $\mathbf{R}$-Cartier $\mathbf{R}$-divisor representing $\eta$.
Here $a_i\in \mathbf{R}$ and $C_i$'s are integral Cartier divisors.
We fix an integer $l>n+1$ such that $(l-(n+1))H+eE-\sum c_i C_i$ is ample for any $e,c_i\in [0,1]$.

We would like to prove that for any coherent sheaf $\mathscr{F}$ on $E$, there is $k_0$ such that
\begin{equation}\label{eq:vanishingofcoh}
h^{n-1}(E,\mathscr{F}\otimes\mathscr{O}_E(kE -\sum \lfloor ta_i\rfloor \pi^*C_i-lH))=0
\end{equation}
for $k\geq k_0$ and $t\geq 0$.
It is enough to prove that for the vanishing of cohomology groups on each of the irreducible components of $E$.
In other words, letting $E'$ be an irreducible component of $E$,
it suffices to prove that there is $k'_0$ such that $h^{n-1}(E',\mathscr{F}\otimes\mathscr{O}_{E'}(kE -\sum \lfloor ta_i\rfloor \pi^*C_i-lH))=0$
for $k\geq k'_0$ and $t\geq 0$.
As there is a surjection $\oplus\mathscr{O}(B)\twoheadrightarrow \mathscr{F}$, where $\mathscr{O}(B)$ is a line bundle, 
it suffices to prove the vanishing assuming $\mathscr{F}$ is a line bundle $\mathscr{O}(B)$.
By duality,
\begin{multline*}
h^{n-1}(E',\mathscr{O}_{E'}(kE -\sum \lfloor ta_i\rfloor \pi^*C_i+B-lH))\\
= h^{0}(E',\omega_{E'}\otimes\mathscr{O}_{E'}(-kE +\sum \lfloor ta_i\rfloor \pi^*C_i-B+lH)),
\end{multline*}
where $\omega_{E'}$ is the dualizing sheaf of $E'$.
We may embed $\omega_{E'}\hookrightarrow\mathscr{O}_{E'}(jH)$ for some $j$ by lemma \ref{lemma:dualizing}.
It suffices to prove that there is $k'_0$ such that
\begin{equation}\label{eq:E'}
h^{0}(E',\mathscr{O}_{E'}(-kE +\sum \lfloor ta_i\rfloor \pi^*C_i -B+(l+j)H))=0
\end{equation}
for $k\geq k'_0$ and $t\geq 0$.

As $D|_Y$ is not big, $-D|_Y$ is $(n-r-1)$-almost ample.
By \cite[Proposition 2.8]{Lau16}, $\pi^*(-D)|_E$ is also $(n-r-1)$-almost ample.
Since $\mathscr{O}_E(E)$ is $(r-1)$-ample, 
we may take $k'_0$ such that $(kE+\sum e_i \pi^*C_i+B -(l+j)H)|_{E'}$ is $(r-1)$-ample for $k\geq k'_0$ and $e_i \in [0,1]$, thanks to the openness of the $(r-1)$-ample cone (Theorem \ref{thm:open}).
Thus for $k\geq k'_0$ and $t\geq 0$,
\begin{multline*}
(kE -\sum \lfloor ta_i\rfloor \pi^*C_i+B-(l+j)H)|_{E'}\\
=((kE+\sum \{ta_i\}\pi^* C_i+B -(l+j)H)+\pi^*(-tD))|_{E'}
\end{multline*}
is $(r-1)+(n-r-1)=(n-2)$-ample, by Theorem \ref{thm:open}.
Now we have (\ref{eq:E'}) by \cite[Theorem 9.1]{Tot13}, hence also (\ref{eq:vanishingofcoh}).

If we fix $t$ and take $k$ large enough,
then
$h^{n}(\tilde{X},\mathscr{O}_{\tilde{X}}(kE -\sum \lfloor ta_i\rfloor \pi^*C_i-lH)))=0$,
since $E$ is $(n-1)$-ample.
We tensor the short exact sequence
\begin{equation}\label{eq:ses}
0\rightarrow
\mathscr{O}_{\tilde{X}}(kE)\rightarrow
\mathscr{O}_{\tilde{X}}((k+1)E)\rightarrow
\mathscr{O}_{E}((k+1)E)\rightarrow
0
\end{equation}
by $\mathscr{O}_{\tilde{X}}(-\sum \lfloor ta_i\rfloor \pi^*C_i -lH)$,
and consider its associated long exact sequence of cohomologies.
We apply (\ref{eq:vanishingofcoh}), letting $\mathscr{F}$ to be the structure sheaf $\mathscr{O}_E$, there is $k_0$ such that $h^{n-1}(E,\mathscr{O}_E(kE -\sum \lfloor ta_i\rfloor \pi^*C_i-lH))=0$
for $k\geq k_0$ and $t\geq 0$.
Therefore, 
\[
h^{n}(\tilde{X},\mathscr{O}_{\tilde{X}}(kE -\sum \lfloor ta_i\rfloor \pi^*C_i -lH))=0
\]
for $k\geq k_0$ and $t\geq 0$.
We may now conclude the proof by applying Proposition \ref{prop:topcoh}.
\end{proof}

\begin{proposition}\label{prop:num dim}
Let $X$ be a projective variety and let $Y$ be a subvariety of $X$.
Let $D$ be a pseudoeffective $\mathbf{R}$-Cartier $\mathbf{R}$-divisor such that $D$ does not numerically dominate $Y$.
Let $\pi:\tilde{X}\rightarrow X$ be the blowup of $X$ along $Y$, with exceptional divisor $E$.
Suppose $\pi|_E:E\rightarrow Y$ is an equidimensional morphism.
Then $\kappa_{\sigma}(D)\leq \kappa_{\sigma}(D|_Y)$.
\end{proposition}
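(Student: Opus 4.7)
The plan is to bound, for each integral Cartier divisor $H$ on $X$, the growth of $h^{0}(X, \mathscr{O}_X(\sum\lfloor ta_i\rfloor C_i + H))$ in $t$ by the corresponding quantity for $D|_Y$, via the blowup $\pi: \tilde X \to X$. Fix a decomposition $D = \sum a_i C_i$ with integral Cartier $C_i$, and write $F_t := \pi^{*}(\sum \lfloor ta_i\rfloor C_i) + \pi^{*}H$ on $\tilde X$. The natural map $\mathscr{O}_X \to \pi_{*}\mathscr{O}_{\tilde X}$ gives $h^{0}(X, \mathscr{O}_X(\sum \lfloor ta_i\rfloor C_i + H)) \leq h^{0}(\tilde X, \mathscr{O}_{\tilde X}(F_t))$, and iterating the short exact sequence $0 \to \mathscr{O}_{\tilde X}(F_t - (k+1)E) \to \mathscr{O}_{\tilde X}(F_t - kE) \to \mathscr{O}_E(F_t - kE) \to 0$ yields
\[
h^{0}(\tilde X, \mathscr{O}_{\tilde X}(F_t)) \leq h^{0}(\tilde X, \mathscr{O}_{\tilde X}(F_t - (N+1)E)) + \sum_{k=0}^{N} h^{0}(E, \mathscr{O}_E(F_t - kE)).
\]

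The first key step is to make the leading term vanish for a fixed (in $t$) choice of $N$. Using Lemma \ref{lemma:numdom} together with the hypothesis that $\pi^{*}D$ does not numerically dominate $E$, we produce an ample divisor $A$ on $\tilde X$ and a bound $b$ such that $t_1 \pi^{*}D - t_2 E + A$ is not pseudoeffective whenever $t_1, t_2 > b$. Since the class of $\sum \{ta_i\}\pi^{*}C_i$ ranges in a bounded subset of $\N^{1}(\tilde X)_{\mathbf{R}}$, by enlarging $A$ we may simultaneously arrange that $A + \sum\{ta_i\}\pi^{*}C_i - \pi^{*}H$ is pseudoeffective for every $t \geq 0$. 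Then the identity
\[
F_t - (N+1)E = \bigl(t\pi^{*}D - (N+1)E + A\bigr) - \bigl(A + \textstyle\sum\{ta_i\}\pi^{*}C_i - \pi^{*}H\bigr)
\]
expresses $[F_t - (N+1)E]$ as a non-pseudoeffective class minus a pseudoeffective one, forcing it to be non-pseudoeffective. Since $F_t - (N+1)E$ is an integral Cartier divisor, this gives $h^{0}(\tilde X, \mathscr{O}_{\tilde X}(F_t - (N+1)E)) = 0$ as soon as $t, N+1 > b$. Fix $N = \lceil b \rceil$.

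The second key step is to push each remaining summand to $Y$. Since $\mathscr{O}_{\tilde X}(F_t)|_E = (\pi|_E)^{*}\mathscr{O}_Y(\sum\lfloor ta_i\rfloor \iota^{*} C_i + \iota^{*}H)$, the projection formula yields
\[
h^{0}(E, \mathscr{O}_E(F_t - kE)) = h^{0}\bigl(Y, \mathscr{G}_k \otimes \mathscr{O}_Y(\textstyle\sum\lfloor ta_i\rfloor \iota^{*} C_i + \iota^{*}H)\bigr),
\]
where $\mathscr{G}_k := (\pi|_E)_{*}\mathscr{O}_E(-kE)$ is coherent on $Y$; the equidimensionality of $\pi|_E$ ensures that these push-forwards are well-behaved on all of $Y$. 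For each of the finitely many $k \in \{0, \ldots, N\}$ choose a surjection $\mathscr{O}_Y(-B_k)^{\oplus r_k} \twoheadrightarrow \mathscr{G}_k$ with $B_k$ integral Cartier, giving $h^{0}(E, \mathscr{O}_E(F_t - kE)) \leq r_k\, h^{0}(Y, \mathscr{O}_Y(\sum\lfloor ta_i\rfloor \iota^{*} C_i + \iota^{*}H - B_k))$. By Proposition \ref{prop: numdim}(\ref{item: numerical}) (applied after refining $\sum a_i \iota^{*} C_i$ to an integral decomposition, with the bounded discrepancy absorbed via Lemma \ref{lemma: global section} into the shifts $B_k$), each term on the right grows as $o(t^l)$ for every integer $l > \kappa_{\sigma}(D|_Y)$. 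Summing over $k$ and letting $H$ range over all integral Cartier divisors yields $\kappa_{\sigma}(D) \leq \kappa_{\sigma}(D|_Y)$.

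The main obstacle is the first step: a single ample $A$ must simultaneously witness the failure of numerical dominance under the two-parameter test $(t_1, t_2) = (t, N+1)$ and absorb the $t$-dependent bounded fluctuation $\sum\{ta_i\}\pi^{*}C_i$. That these two demands can be met by one $A$ rests precisely on the flexibility provided by Lemma \ref{lemma:numdom} (which allows us to enlarge $A$ freely without losing the non-pseudoeffectivity conclusion) together with the boundedness of the fractional-part classes.
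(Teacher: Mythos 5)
Your overall architecture is reasonable and genuinely different from the paper's (you descend from $\mathscr{O}_{\tilde X}(F_t)$ through the exact sequences $0\to\mathscr{O}(F_t-(k+1)E)\to\mathscr{O}(F_t-kE)\to\mathscr{O}_E(F_t-kE)\to 0$ working with $h^0$ throughout, whereas the paper first converts $h^0$ to $h^n$ by Serre duality and climbs upward in $E$), and your first key step is correct: the non-dominance hypothesis plus Lemma \ref{lemma:numdom} does let you kill $h^0(\tilde X,\mathscr{O}_{\tilde X}(F_t-(N+1)E))$ for a fixed $N$ after absorbing the bounded fractional parts into a large ample $A$. But the second key step contains a genuine error. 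You bound $h^{0}(Y,\mathscr{G}_k\otimes L_t)$ by choosing a \emph{surjection} $\mathscr{O}_Y(-B_k)^{\oplus r_k}\twoheadrightarrow\mathscr{G}_k$. Since $H^0$ is left exact, not right exact, a surjection of sheaves gives no upper bound on $h^0$ of the target: e.g.\ $\mathscr{O}_{\mathbb{P}^1}(-1)\twoheadrightarrow k(p)$ has $h^0=0$ on the source and $h^0=1$ on the target, and the defect is an $h^1$ of the kernel, which here would grow with $t$. What you actually need is an \emph{injection} $\mathscr{G}_k\hookrightarrow\mathscr{O}_Y(B_k)^{\oplus r_k}$, and that exists only if $\mathscr{G}_k=(\pi|_E)_*\mathscr{O}_E(-kE)$ is torsion-free. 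This is not automatic at the stated level of generality: $X$, hence $\tilde X$, is an arbitrary (possibly non-$S_2$) variety, so the Cartier divisor $E$ can have embedded associated points, and $(\pi|_E)_*\mathscr{O}_E(-kE)$ can then acquire torsion supported on a proper closed subset $Z\subsetneq Y$; the contribution $h^0(Y,\mathscr{T}\otimes L_t)$ of that torsion is governed by the positivity of $D$ along $Z$, which need not be bounded by $\kappa_\sigma(D|_Y)$. So the gap is not cosmetic.

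This is precisely the difficulty the paper's detour through duality is designed to avoid: after dualizing, one is bounding \emph{top-degree} cohomology, which is right exact, so surjections $\bigoplus\mathscr{O}(-b_2H)\twoheadrightarrow\omega_{\tilde X}$ legitimately give upper bounds; the passage from $\tilde X$ to $E$ happens in degree $n$ versus $n-1$; the Leray spectral sequence together with the equidimensionality of $\pi|_E$ (which kills $R^d(\pi|_E)_*$ for $d>r-1$) identifies $h^{n-1}(E,-)$ with $h^{n-r}(Y,R^{r-1}(\pi|_E)_*(-))$; and only at the very end does one dualize back to an $h^0$ on $Y$, where the sheaf to be embedded, $\omega_Y\otimes(R^{r-1}(\pi|_E)_*\mathscr{O}_E(kE-(b_1+b_2)H))^{\vee}$, is torsion-free (duals are reflexive, $\omega_Y$ is torsion-free by Lemma \ref{lemma:dualizing}), so the required injection into $\bigoplus\mathscr{O}_Y(lH)$ exists. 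A symptom of the problem in your write-up is that the equidimensionality hypothesis does no real work in your argument, whereas in the paper it is exactly what makes the Leray step function. To repair your proof you would either have to establish torsion-freeness of $(\pi|_E)_*\mathscr{O}_E(-kE)$ (or otherwise control the torsion), or dualize on $E$ before pushing forward --- at which point you have essentially reconstructed the paper's argument.
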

\begin{proof}
We use the same notations as in the proof of the preceding proposition.
By Proposition \ref{prop:basic}, $\kappa_{\sigma}(D)=\kappa_{\sigma}(\pi^*D)$.
It is enough to look at the growth (in $t$) of $h^{0}(\tilde{X},\mathscr{O}_{\tilde{X}}(\sum \lfloor ta_i\rfloor \pi^*C_i+b_1H))$, for a large enough integer $b_1$.
Since $\omega_{\tilde{X}}$ is generically a line bundle, the natural map $\mathscr{O}_{\tilde{X}}\rightarrow \omega_{\tilde{X}}^{\vee}\otimes \omega_{\tilde{X}}$ is an injection.
We have the inequality
\begin{align*}
&h^{0}(\tilde{X},\mathscr{O}_{\tilde{X}}(\sum \lfloor ta_i\rfloor \pi^*C_i+b_1H))\\
&\leq
h^{0}(\tilde{X},\omega_{\tilde{X}}^{\vee}\otimes \omega_{\tilde{X}}\otimes\mathscr{O}_{\tilde{X}}(\sum \lfloor ta_i\rfloor \pi^*C_i+b_1H))\\
&=
h^{n}(\tilde{X},\omega_{\tilde{X}}\otimes\mathscr{O}_{\tilde{X}}(-\sum \lfloor ta_i\rfloor \pi^*C_i-b_1H)).
\end{align*}
There is some surjection $\oplus^{N}\mathscr{O}_{\tilde{X}}(-b_2H)\twoheadrightarrow\omega_{\tilde{X}}$.
Therefore,
\begin{multline*}
h^{n}(\tilde{X},\omega_{\tilde{X}}\otimes\mathscr{O}_{\tilde{X}}(-\sum \lfloor ta_i\rfloor \pi^*C_i-b_1H))\\
\leq
N\cdot h^{n}(\tilde{X},\mathscr{O}_{\tilde{X}}(-\sum \lfloor ta_i\rfloor \pi^*C_i-(b_1+b_2)H))
\end{multline*}
By Proposition \ref{prop:num dom} and Proposition \ref{prop:topcoh}, there is $k_0$ such that 
\[
h^{n}(\tilde{X},\mathscr{O}_{\tilde{X}}(kE-\sum \lfloor ta_i\rfloor \pi^*C_i-(b_1+b_2)H))=0
\]
for $k\geq k_0$ and $t\geq k_0$.
Tensoring the short exact sequence (\ref{eq:ses}) by $\mathscr{O}_{\tilde{X}}(-\sum \lfloor ta_i\rfloor \pi^*C_i-(b_1+b_2)H)$ and considering the associated long exact sequence of cohomologies,
we have
\begin{multline*}
h^{n}(\tilde{X},\mathscr{O}_{\tilde{X}}(-\sum \lfloor ta_i\rfloor \pi^*C_i-(b_1+b_2)H))\\
\leq 
\sum_{k=1}^{k_0} h^{n-1}(E,\mathscr{O}_{E}(kE-\sum \lfloor ta_i\rfloor \pi^*C_i-(b_1+b_2)H))
\end{multline*}
for $t\geq k_0$.

Note that the restriction of $\pi:\tilde{X}\rightarrow X$ to the exceptional divisor $\pi|_E:E\rightarrow Y$ is an equidimensional morphism, with fiber dimension equals to $r-1$.
Thus, $R^{d}(\pi|_{E})_{*}\mathscr{O}_E(kE-(b_1+b_2)H)=0$ for $d>r-1$.
Note also that $\dim Y=n-r$, which implies that $h^{d}(Y,\mathscr{F})=0$ for $d>n-r$ and for any coherent sheaf $\mathscr{F}$ on $Y$.
We now apply Leray spectral sequence and the above remarks to see that
for $1\leq k \leq k_0$,
\begin{align*}
& h^{n-1}(E,\mathscr{O}_{E}(kE-\sum \lfloor ta_i\rfloor \pi^*C_i-(b_1+b_2)H))\\
&=
h^{n-r}(Y,(R^{r-1}(\pi|_E)_{*}\mathscr{O}_{E}(kE-(b_1+b_2)H))\otimes \mathscr{O}_{Y}(-\lfloor ta_i\rfloor C_i))
\\
&=
h^{0}(Y,\omega_Y\otimes(R^{r-1}(\pi|_E)_{*}\mathscr{O}_{E}(kE-(b_1+b_2)H))^{\vee}\otimes \mathscr{O}_{Y}(\lfloor ta_i\rfloor C_i)),
\end{align*}
where the last equality holds by Serre duality.
Since $(R^{r-1}(\pi|_E)_{*}\mathscr{O}_{E}(kE-(b_1+b_2)H))^{\vee}$ is reflexive \cite[Corollary 1.2]{Har80}
and by lemma \ref{lemma:dualizing},
for sufficiently large $l$,
there is an embedding
$\omega_Y\otimes(R^{r-1}(\pi|_E)_{*}\mathscr{O}_{E}(kE-(b_1+b_2)H))^{\vee}\hookrightarrow\oplus^{N_k} \mathscr{O}_{Y}(lH)$ for $1\leq k\leq k_0$.
We can conclude that
$h^{0}(\tilde{X},\mathscr{O}_{\tilde{X}}(\sum \lfloor ta_i\rfloor \pi^*C_i+b_1H))\leq N\cdot (\sum_{k=1}^{k_0}N_k)\cdot h^{0}(Y, \mathscr{O}_{Y}(\lfloor ta_i\rfloor C_i+lH))$ for $t\gg 0$.
This proves the proposition.
\end{proof}

\begin{customthm}{A}[Numerical dimension via restriction]\label{thm:num dim}
With the same assumptions as in proposition \ref{prop:num dom}.
Then $\kappa_{\sigma}(\eta)\leq \kappa_{\sigma}(\eta|_Y)$.
\end{customthm}
\begin{proof}
Combine Proposition \ref{prop:num dom} and \ref{prop:num dim} and note that if $Y$ is locally ample, then $E\rightarrow Y$ is equidimensional \cite[Proposition 4.6]{Lau16}.
\end{proof}
\section{Applications of Theorem \ref{thm: 3}}
We give two applications of Theorem \ref{thm: 3}.
The first one is on positivity of cycle classes of locally ample and ample curves; the second one concerns the fact that locally ample subvarieties cannot be contracted.

\subsection{Cycle classes of locally ample/ample curves}
Peternell conjectured that if $Y$ is a smooth curve with ample normal bundle in a smooth projective variety $X$ and $\eta\in\N^{1}(X)$ is a pseudoeffective class with $\eta|_Y= 0$, then $\kappa_{\sigma}(\eta)=0$ \cite[Conjecture 4.12]{Pet12}.
Ottem later showed that the conjecture is indeed true \cite[Theorem 1]{Ott16}.
From there, Peternell observed that the cycle class of a smooth curve with ample normal bundle lies in the interior of the cone of curves (\cite[Conjecture 4.1]{Pet12},\cite[Theorem 2]{Ott16}).
Indeed, if $\eta\in\N^{1}(X)_{\mathbf{R}}$ is nef and $\eta|_Y=0$, the conjecture says $\kappa_{\sigma}(\eta)=0$.
But this forces $\eta=0$.
We are able to generalize this result by removing any restrictions on smoothness of $X$ and $Y$.
\begin{proposition}\label{prop:ott}\cite{Ott16}
Let $X$ be a projective variety.
Let $\eta\in\N^{1}(X)_{\mathbf{R}}$ be a pseudoeffective class.
If $\kappa_{\sigma}(\eta)=0$ and $\eta$ is nef, 
then $\eta= 0$. 
\end{proposition}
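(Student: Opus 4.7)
The strategy is to reduce to the smooth projective case via resolution of singularities and then apply the Hodge index theorem.

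Suppose for contradiction that $\eta$ is nef with $\kappa_\sigma(\eta) = 0$ but $\eta \neq 0$. First, pick a resolution $\pi : X' \to X$. The pullback $\pi^*\eta \in \N^1(X')_{\mathbf{R}}$ is again nef: by the projection formula $\pi^*\eta \cdot C' = \eta \cdot \pi_*C' \geq 0$ for every curve $C' \subset X'$, since $\pi_*C'$ is either zero (if $C'$ is contracted) or a positive multiple of an integral curve in $X$. Proposition \ref{prop:basic}(1) gives $\kappa_\sigma(\pi^*\eta) = \kappa_\sigma(\eta) = 0$. Moreover, $\pi^*$ is injective on $\N^1_{\mathbf{R}}$: since $\pi$ is surjective, every curve $C \subset X$ arises as $\pi_*C'$ (up to a positive factor) for some $C' \subset X'$, so $\pi^*\eta \equiv 0$ would force $\eta \cdot C = 0$ for every such $C$, i.e.\ $\eta \equiv 0$. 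Hence $\pi^*\eta \neq 0$, and I may assume $X$ itself is smooth projective.

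On smooth projective $X$, I would use the Hodge index theorem to show $\eta \cdot H^{n-1} > 0$ for any ample $H$. The symmetric form $(D_1, D_2) \mapsto D_1 \cdot D_2 \cdot H^{n-2}$ on $\N^1(X)_{\mathbf{R}}$ has signature $(1, \rho-1)$, and the primitive hyperplane $\{D : D \cdot H^{n-1} = 0\}$ is negative definite. If $\eta \cdot H^{n-1} = 0$ with $\eta \neq 0$, then $\eta^2 \cdot H^{n-2} < 0$; but nef-ness of $\eta$ (as a limit of ample classes) forces $\eta^2 \cdot H^{n-2} \geq 0$, a contradiction. Thus $\eta \cdot H^{n-1} > 0$.

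To conclude, I would invoke the classical fact that for a nef $\mathbf{R}$-Cartier divisor on a smooth projective variety, $\kappa_\sigma(\eta)$ equals the largest $k$ with $\eta^k \cdot H^{n-k} > 0$ for $H$ ample \cite{Nak04}; this gives $\kappa_\sigma(\eta) \geq 1$, contradicting the hypothesis. Alternatively, for integral $\eta = [D]$ the final step can be verified directly: since $K_X + mD + H$ is ample for $m \geq 0$ and $H$ ample, Kodaira vanishing together with asymptotic Riemann-Roch yield $h^0(K_X + mD + H) \gtrsim m$, because the $m$-coefficient of $(mD + H)^n$ equals $n(D \cdot H^{n-1}) > 0$ while the higher-$m$ coefficients $\binom{n}{k} D^k \cdot H^{n-k}$ are nonnegative by nef-ness of $D$. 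The main obstacle is precisely this final step --- promoting numerical positivity to section growth --- for which the reduction to smooth $X$ is essential, since it makes the Hodge index theorem and Kodaira vanishing available.
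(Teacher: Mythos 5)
Your argument is correct in its main line, but it takes a genuinely different route from the paper. The paper follows Ottem's original argument: it inducts on $\dim X$ by restricting to an ample divisor $H\subset X$, using Fujita vanishing to get surjectivity of $H^{0}(X,\mathscr{O}_X(\sum\lfloor ta_i\rfloor C_i+kH))\rightarrow H^{0}(H,\cdot)$, hence $\kappa_{\sigma}(\eta|_H)=0$, and works directly on the possibly singular $X$ with no resolution, no Hodge theory, and no Riemann--Roch. You instead pass to a resolution (legitimate here, since the paper's Proposition \ref{prop:basic}(1) gives birational invariance of $\kappa_\sigma$ and $\pi^*$ is injective on $\N^1$), and then combine the generalized Hodge index theorem with the classical equality $\kappa_\sigma(\eta)=\nu(\eta)$ for nef classes \cite{Nak04}. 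What your approach buys is brevity and no induction; what it costs is reliance on resolution of singularities and on the comparison $\kappa_\sigma=\nu$, which is a nontrivial imported result, whereas the paper's proof is self-contained modulo Fujita vanishing. One caveat on your ``alternative direct verification'': the $m$-coefficient of the polynomial $\chi(K_X+mD+H)$ is \emph{not} the $m$-coefficient of $(mD+H)^n/n!$; Hirzebruch--Riemann--Roch contributes terms $D\cdot(\text{Todd classes})$ of lower total degree whose sign you do not control, so positivity of $D\cdot H^{n-1}$ alone does not immediately force the linear coefficient to be positive. The standard fix is to replace $H$ by $pH$ with $p\gg0$, so that the linear coefficient becomes $\tfrac{p^{n-1}}{(n-1)!}(D\cdot H^{n-1})+O(p^{n-2})>0$; since $\chi=h^0\geq 0$ is then a nonconstant polynomial in $m$, it grows at least linearly, and taking the maximum over auxiliary divisors in the definition of $\kappa_\sigma$ makes this replacement harmless. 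With that repair (or simply leaning on the cited fact from \cite{Nak04}), your proof is complete.
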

\begin{proof}
It follows from the argument on \cite[p.5]{Ott16}.
We include the proof here for the sake of completeness.

Let $H$ be an ample divisor of $X$.
Note that if we can prove that $\eta|_H=0$, it would imply $\eta=0$.
By induction on dimension of $X$,
it suffices to show that $\kappa_{\sigma}(\eta|_H)=0$.
Let $D=\sum a_i C_i$ be a pseudoeffective $\mathbf{R}$-Cartier $\mathbf{R}$-divisor such that the numerical class of $D$ is $\eta$.
Here $a_i\in \mathbf{R}$ and $C_i$'s are integral Cartier divisors.
By Fujita vanishing theorem, there is a $k_1$ such that for $k\geq k_0$,
\[
H^{1}(X,\mathscr{O}_X(kH+ N)=0,
\]
for any nef divisor $N$.
Take a sufficiently large $k_1$ such that $k_1 H-\sum e_i C_i$ is ample, for any $e_i\in [0,1]$.
For $t\geq 0$, $k_1 H+\sum\lfloor ta_i\rfloor C_i=tD + (k_1 H - \sum \{ta_i\}C_i)$ is nef.
Thus,
\[
H^{1}(X,\mathscr{O}_X(kH+ \sum \lfloor ta_i\rfloor D)=0
\]
for $k\geq k_0+k_1$.
Therefore, we have the surjection
\[
H^{0}(X,\mathscr{O}_X(\sum \rfloor ta_i\lfloor D +kH)
\twoheadrightarrow
H^{0}(H,\mathscr{O}_H(\sum \rfloor ta_i\lfloor D +kH)
\]
for $k\geq k_0+k_1$ and $t\geq 0$.
Hence $\kappa_{\sigma}(\eta|_H)=0$.
\end{proof}

The following theorem generalizes the first half of the main theorem in Ottem's paper \cite[Theorem 2]{Ott16}.
\begin{theorem}
Let $X$ be a projective variety.
Let $Y$ be a locally ample subvariety of dimension $1$ of $X$.
Then the cycle class of $Y$ in $\N_1(X)_{\mathbf{R}}$ is big, i.e. it lies in the interior of the cone of curves, $\overline{\NE}(X)$.
\end{theorem}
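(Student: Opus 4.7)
The plan is to invoke Kleiman-style duality: $[Y]$ lies in the interior of $\overline{\NE}(X)$ if and only if $\eta\cdot[Y]>0$ for every nonzero class $\eta$ in the dual cone $\Nef(X)$. So I would argue by contradiction: suppose some nonzero nef class $\eta\in\N^1(X)_{\mathbf{R}}$ satisfies $\eta\cdot[Y]=0$, and aim to derive $\eta=0$.

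Since $Y$ is an integral projective curve, $\N^1(Y)_{\mathbf{R}}$ is one-dimensional and classes of pulled-back Cartier divisors are detected by degree. Thus $\eta|_Y$ is determined by $\deg(\eta|_Y)=\eta\cdot[Y]=0$, so $\eta|_Y\equiv 0$ numerically. In particular $\eta|_Y$ is pseudoeffective, not big, and $\kappa_\sigma(\eta|_Y)=0$. Because $\eta$ is nef (hence pseudoeffective) and $Y$ is a locally ample subvariety, Theorem \ref{thm: 3} applies and yields $\kappa_\sigma(\eta)\le\kappa_\sigma(\eta|_Y)=0$. Proposition \ref{prop:ott} then promotes this to $\eta=0$, contradicting our assumption.

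The real work is absorbed in Theorem \ref{thm: 3} and Proposition \ref{prop:ott}; what remains is the elementary duality between $\overline{\NE}(X)$ and $\Nef(X)$ together with the fact that divisor classes on a curve are detected by degree. No step looks to be a serious obstacle here: the conceptual input, already observed by Peternell and Ottem in the lci setting, is that the vanishing of the numerical dimension of a nef class forces the class to be zero, so an inequality on $\kappa_\sigma$ under restriction automatically upgrades to the interior assertion via duality.
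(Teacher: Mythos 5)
Your proposal is correct and follows exactly the paper's argument: assume a nef class $\eta$ with $\eta\cdot[Y]=0$, note that on the integral curve $Y$ this forces $\eta|_Y\equiv 0$ so Theorem~\ref{thm: 3} gives $\kappa_\sigma(\eta)=0$, and then Proposition~\ref{prop:ott} yields $\eta=0$; the only difference is that you spell out the cone duality and the degree computation on $Y$, which the paper leaves implicit.
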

\begin{proof}
Suppose there is some nef class $\eta \in \N^{1}(X)_{\mathbf{R}}$
such that $\eta|_Y= 0$.
By theorem \ref{thm:num dim},
$\kappa_{\sigma}(\eta)=0$.
We then apply Proposition \ref{prop:ott} to conclude that $\eta=0$.
\end{proof}
We shall need the following proposition which shows that a pseudoeffective class $\eta\in\N^{1}(X)_{\mathbf{R}}$ on a smooth projective variety with $\kappa_{\sigma}(\eta)=0$ is in fact ``effective''.
\begin{proposition}\cite[Proposition V.2.7]{Nak04}\label{prop:Nak}
Let $X$ be a smooth projective variety.
Let $\eta\in \N^{1}(X)_{\mathbf{R}}$ be a pseudoeffective class.
If $\kappa_{\sigma}(\eta)=0$,
then there is an $\mathbf{R}$-Cartier $\mathbf{R}$-divisor $\sum a_i C_i$,
where $a_i\in \mathbf{R}_{>0}$ and $C_i$ are prime divisors,
such that its numerical class in $\N^{1}(X)_{\mathbf{R}}$ equals to $\eta$.
\end{proposition}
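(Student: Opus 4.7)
The plan is to follow Nakayama's divisorial Zariski decomposition strategy. Fix a representative $D = \sum a_i C_i$ of $\eta$ and an ample divisor $A$. For each prime divisor $\Gamma$, define the asymptotic order
\[
\sigma_\Gamma(D) := \lim_{\epsilon \to 0^+} \inf\{\mathrm{mult}_\Gamma E : E \equiv D + \epsilon A,\ E \geq 0\},
\]
which is well-defined in $[0, \infty)$ (the infimum is finite since $D + \epsilon A$ is big, and the limit exists by monotonicity in $\epsilon$) and independent of $A$. A standard finiteness argument in $\N^1(X)_{\mathbf{R}}$, bounding the set of $\Gamma$ with $\sigma_\Gamma(D) > 0$ against a complete-intersection curve class, shows $\{\Gamma : \sigma_\Gamma(D) > 0\}$ is finite, so $N := \sum_\Gamma \sigma_\Gamma(D)\,\Gamma$ is an effective $\mathbf{R}$-divisor. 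Set $P := D - N$; then $P$ is pseudoeffective and satisfies $\sigma_\Gamma(P) = 0$ for every prime divisor $\Gamma$, i.e., $P$ is \emph{movable}.

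Since $N \geq 0$, tensoring with the canonical section of $\lfloor tN \rfloor$ gives $h^0(X, \lfloor tP \rfloor + H) \leq h^0(X, \lfloor tD \rfloor + H)$ up to a bounded rounding correction, whence $\kappa_\sigma(P) \leq \kappa_\sigma(D) = 0$. The proof thus reduces to showing that a pseudoeffective movable class $P$ with $\kappa_\sigma(P) = 0$ must be numerically trivial; granting this, $\eta \equiv N = \sum \sigma_\Gamma(D)\,\Gamma$ furnishes the desired representation. I would establish this reduction by induction on $n = \dim X$: the base case $n = 1$ is immediate from Riemann--Roch on a curve. For the inductive step, restrict $P$ to a very general smooth member $H$ of a very ample linear system; for general $H$ the restriction $P|_H$ remains pseudoeffective and movable, and a surjection of global sections argument would yield $\kappa_\sigma(P|_H) \leq \kappa_\sigma(P) = 0$, so by induction $P|_H \equiv 0$. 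Varying $[H]$ over a spanning set of very ample classes then forces $P \equiv 0$.

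The main obstacle is producing the surjection
\[
H^0(X, \lfloor tP \rfloor + kH) \twoheadrightarrow H^0(H, (\lfloor tP \rfloor + kH)|_H)
\]
uniformly in $t$. The Fujita-vanishing trick used in the proof of Proposition \ref{prop:ott} requires the shift $\lfloor tP \rfloor + (k-1)H$ to be nef, which may fail because $P$ is only movable. To handle this I would pass to a birational modification $\mu : Y \to X$ on which $\mu^*(P + \epsilon A)$ decomposes as a nef positive part plus a small effective correction that shrinks with $\epsilon$ (by movability), apply Kawamata--Viehweg vanishing on $Y$, and push forward to recover the surjection on $X$. Alternatively, one may follow Nakayama's original route via an Angehrn--Siu type multiplier-ideal argument directly on $X$.
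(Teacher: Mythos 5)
The paper does not actually prove this statement: it is quoted verbatim from \cite[Proposition V.2.7]{Nak04}, so the only meaningful comparison is with Nakayama's argument. Your reduction reproduces its overall shape correctly: the definition of $\sigma_\Gamma$, the finiteness of $\{\Gamma:\sigma_\Gamma(D)>0\}$, the movability of $P=D-N$, and the inequality $\kappa_\sigma(P)\le\kappa_\sigma(D)=0$ are all fine and standard. The problem is that everything you have done so far is the easy bookkeeping; the assertion you reduce to --- a movable pseudoeffective class with $\kappa_\sigma=0$ is numerically trivial --- \emph{is} the content of Nakayama's proposition, and neither of your two proposed routes establishes it.

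Concretely, the inductive route hinges on the uniform surjection $H^0(X,\lfloor tP\rfloor+kH)\twoheadrightarrow H^0(H,\cdot)$, and you propose to get the required $H^1$-vanishing from a birational decomposition of $\mu^*(P+\epsilon A)$ into a nef part plus an effective correction that ``shrinks with $\epsilon$ by movability.'' Movability ($N_\sigma(P)=0$) only controls the divisorial part of the asymptotic base locus; it gives no bound on the numerical size of the effective part in a Fujita-type approximation (Fujita approximation controls volumes, not classes), and the existence of an honestly nef positive part on some model is exactly what Nakayama's Chapter IV counterexamples to Zariski decomposition rule out, even for movable classes. Even granting such a splitting, you would still need the correction term to have small multiplicity along the strict transform of $H$ to conclude. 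You would also need that $P|_H$ remains movable for very general $H$, and that $\kappa_\sigma(P|_H)\le\kappa_\sigma(P)$; both of these are restriction statements for asymptotic invariants whose known proofs run through the same Angehrn--Siu/multiplier-ideal machinery you are trying to avoid, so the induction is close to circular. Your fallback --- ``follow Nakayama's original route via an Angehrn--Siu type argument'' --- is the correct tool (it is precisely how one produces sections of $\lfloor tP\rfloor+A$ with bounded divisorial vanishing when $P$ is movable), but as written it is a citation, not a proof. Since the paper itself simply cites \cite{Nak04} here, that is an acceptable resolution; just be clear that the substance of the proposition lives entirely in the step your sketch leaves open.
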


We are now ready to show that the cycle class of an ample curve lies in the interior of the movable cone of curves.
This strengthens the second half of \cite[Theorem 2]{Ott16}.
\begin{theorem}
Let $X$ be a projective variety and
let $Y$ be a locally ample curve in $X$.
Suppose $Y$ meets all prime divisors of $X$.
Then the cycle class $[Y]$ lies in the interior of the movable cone of curves.
In particular, the cycle class of an ample subvariety of dimension $1$ lies in the interior of the movable cone of curves.
\end{theorem}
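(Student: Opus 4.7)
The plan is to invoke BDPP duality, which identifies the closure of the movable cone of curves as the dual cone to the pseudoeffective cone of divisors, so the statement reduces to showing $[Y]\cdot D>0$ for every nonzero pseudoeffective class $D\in\N^{1}(X)_{\mathbf{R}}$. Suppose for contradiction that $[Y]\cdot D=0$ for some such $D$. Then $D|_{Y}$ is a pseudoeffective class on the curve $Y$ by \cite{Lau16} of degree $[Y]\cdot D=0$, hence $D|_{Y}\equiv 0$ and in particular not big. Theorem~\ref{thm:num dim} then yields $\kappa_{\sigma}(D)\leq\kappa_{\sigma}(D|_{Y})=0$, and Proposition~\ref{prop:num dom} tells us that $D$ does not numerically dominate $Y$; equivalently, $\pi^{*}D-sE$ is not pseudoeffective on $\tilde X:=\Bl_{Y}X$ for any $s>0$.

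Next I would pass to a resolution of singularities $\mu:X'\to X$ chosen to be an isomorphism over the generic point of $Y$ and to factor as $X'\xrightarrow{\mu'}\tilde X\xrightarrow{\pi}X$. On the smooth variety $X'$, $\mu^{*}D$ is pseudoeffective with $\kappa_{\sigma}(\mu^{*}D)=0$, so Proposition~\ref{prop:Nak} produces a representation $\mu^{*}D\equiv\sum_{i}a_{i}C_{i}'$ with $a_{i}>0$ and each $C_{i}'$ a prime divisor of $X'$. Pushing this decomposition forward by $\mu'_{*}$ annihilates the $\mu'$-exceptional components and produces an expression $\pi^{*}D\equiv cE+\sum_{j}b_{j}\tilde D_{j}$ on $\tilde X$, where $c,b_{j}\geq 0$ and each $\tilde D_{j}$ is the strict transform of a prime divisor $D_{j}$ of $X$. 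Using the identity $\tilde D_{j}=\pi^{*}D_{j}-m_{j}E$, with $m_{j}\geq 0$ the multiplicity of $D_{j}$ along $Y$ (positive iff $Y\subset D_{j}$), this rearranges to
\[
\pi^{*}D\equiv\pi^{*}\Bigl(\sum_{j}b_{j}D_{j}\Bigr)+\Bigl(c-\sum_{j}b_{j}m_{j}\Bigr)E.
\]

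The main technical obstacle is to use the failure of numerical dominance to force $c=0$ and $m_{j}=0$ whenever $b_{j}>0$. The first follows from the effectivity of $\pi^{*}D-cE\equiv\sum b_{j}\tilde D_{j}$, so $c>0$ would contradict Proposition~\ref{prop:num dom}; the second, by pushing the displayed identity down to $X$ to obtain $D\equiv\sum_{j}b_{j}D_{j}$, then pulling back to $\tilde X$ and using $[E]\neq 0$ to conclude $\sum_{j}b_{j}m_{j}=0$. Care is needed because $\tilde X$ may fail to be smooth or $\mathbf{Q}$-factorial, and the comparison between effective Weil cycles and pseudoeffective Cartier classes is delicate; I would handle this by carrying out all intersection computations on the smooth $X'$ (using the strict transform $Y'$ of $Y$ and the projection formula $[Y']\cdot\mu^{*}D=[Y]\cdot D$) and only pushing back to $X$ at the very end.

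Finally, once no $D_{j}$ with $b_{j}>0$ contains $Y$, the hypothesis that $Y$ meets every prime divisor of $X$ ensures that each $Y\cap D_{j}$ is a proper, nonempty intersection, hence $[Y]\cdot D_{j}>0$. Combining with $D\equiv\sum_{j}b_{j}D_{j}$ yields
\[
0=[Y]\cdot D=\sum_{j}b_{j}\,[Y]\cdot D_{j}>0,
\]
the desired contradiction. Hence $[Y]$ pairs strictly positively with every nonzero pseudoeffective divisor class, and therefore lies in the interior of the movable cone of curves.
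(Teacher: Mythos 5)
Your overall strategy coincides with the paper's: reduce via BDPP duality to showing $[Y]\cdot D>0$ for every nonzero pseudoeffective $D$, apply Theorem~\ref{thm:num dim} to get $\kappa_{\sigma}(D)=0$, invoke Proposition~\ref{prop:Nak} on a resolution to write the pullback of $D$ as an effective sum of prime divisors, and play this decomposition off against Proposition~\ref{prop:num dom}. The gap is in the execution of the last step. Your bookkeeping lives on $X$ and $\tilde X$: the identity $\tilde D_j=\pi^{*}D_j-m_jE$, the rearrangement $\pi^{*}D\equiv\pi^{*}(\sum_j b_jD_j)+(c-\sum_j b_jm_j)E$, and the intersection numbers $[Y]\cdot D_j$ all presuppose that the prime divisors $D_j$ of $X$ are ($\mathbf{Q}$-)Cartier. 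The theorem makes no such assumption: $X$ is an arbitrary projective variety, possibly far from $\mathbf{Q}$-factorial, so $\pi^{*}D_j$ and $[Y]\cdot D_j$ need not be defined, and the passage from the cycle-level identity $\pi^{*}D\equiv cE+\sum_j b_j\tilde D_j$ in $N_{n-1}(\tilde X)$ back to a pseudoeffectivity statement in $N^{1}(\tilde X)$ (which you need in order to contradict non-dominance when $c>0$) is exactly the delicate point you flag but do not resolve.

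Your proposed repair --- carry out all intersection computations on the smooth $X'$ against the strict transform $Y'$ --- does not close the gap, because the hypothesis that $Y$ meets every prime divisor of $X$ does not transfer to $Y'$: a component $C_i'$ of $\Supp(\mu^{*}D)$ can satisfy $f(C_i')\cap Y\neq\varnothing$ while $C_i'\cap Y'=\varnothing$, the intersection point having been separated by the resolution. From $\sum_i a_i\,[Y']\cdot C_i'=0$ you can only conclude that either some $C_i'$ contains $Y'$ or every $C_i'$ is disjoint from $Y'$, and neither branch delivers your contradiction without further input. The paper bridges precisely this point with the negativity lemma, splitting into two cases on $X'$: if $\bigcup\Supp(B_i)\cap f^{-1}(Y)=\varnothing$ the hypothesis forces every $B_i$ to be $f$-exceptional, so $\eta=0$ by injectivity of $N^{1}(X)\to N_{n-1}(X)$; if not, the negativity lemma together with an intersection computation against a curve $C'\subset f^{-1}(Y)$ dominating $Y$ shows that the entire divisor $f^{-1}(Y)$ is contained in $\bigcup\Supp(B_i)$, so that $f'^{*}(\pi^{*}\eta-\epsilon E)$ is represented by a genuinely effective divisor on the smooth $X'$, contradicting Proposition~\ref{prop:num dom}. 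You should also supply the ``in particular'' clause, which your write-up omits: an ample curve meets every prime divisor because $H^{n-1}(X\setminus Y,\mathscr{F})=0$ for all coherent $\mathscr{F}$.
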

\begin{proof}
Note that the second statement follows from the first.
Indeed, if $Y$ is an ample curve in $X$,
then $H^{n-1}(X\backslash Y,\mathscr{F})=0$ for any coherent sheaf $\mathscr{F}$ on $X\backslash Y$ \cite[Proposition 5.1]{Ott12}.
In particular, $X\backslash Y$ cannot contain any prime divisor.

Let $\pi:\tilde{X}\rightarrow X$ be the blowup of $X$ along $Y$,
let $X'\xrightarrow{f'}\tilde{X}=\Bl_Y X$ be a resolution of singularities on $\tilde{X}$
and let $f=\pi\circ f'$ be the composition.
The famous result in \cite{BDPP} says that the dual cone of the movable cone of curves is the pseudoeffective cone.
We can apply \cite[Theorem 6.1]{Lau16} to see that $[Y]$ lies in the movable cone of curves.
It suffices to show that for any pseudoeffective class $\eta\in N^{1}(X)_{\mathbf{R}}$ such that $\eta \cdot [Y]=0$, then $\eta =0$.

Theorem \ref{thm:num dim} says that 
$\kappa_\sigma(f^*\eta)=\kappa_\sigma(\eta)=0$.
As $f^*\eta$ is pseudoeffective, it is equal to the class of an effective $\mathbf{R}$-Cartier $\mathbf{R}$ divisor $\sum b_i B_i$ where $b_i>0$ and $B_i$'s are prime divisors by proposition \ref{prop:Nak}.

Suppose $\bigcup \Supp(B_i)\cap f^{-1}(Y)= \varnothing$.
By the projection formula, $[\eta]\equiv\sum b_i f_{*}[B_i]$ in $N_{n-1}(X)$. 
But $\bigcup\Supp f(B_i)\cap Y=\varnothing$
and the hypothesis imply all $B_i$'s are exceptional.
Thus $[\eta]= 0$ in $N_{n-1}(X)$ and $\eta= 0$ by \cite[Example 2.7]{FL14}.

We may assume $\bigcup \Supp(B_i)\cap f^{-1}(Y)\neq \varnothing$.
Applying the negativity lemma to $\sum b_i B_i$ (note that $-\sum b_i B_i$ is clearly $f$-nef), for any closed point $p\in f(\bigcup \Supp (B_i))$,
$f^{-1}(p)\subset \bigcup\Supp(B_i)$.
Take a curve $C'\subset f^{-1}(Y)$ such that $f(C')=Y$.
By the previous remark,
$C' \cap \bigcup\Supp(B_i)\neq \varnothing$.
On the other hand,
$\sum b_i B_i\cdot [C']=f^{*}\eta\cdot [C']=\deg(\kappa(C):\kappa(Y))\eta\cdot [Y]=0$.
Therefore, $C'\subset \bigcup\Supp(B_i)$ and $f^{-1}(Y)\subset\bigcup \Supp(B_i)$.
Thus, $f^{\prime *}(\pi^*\eta -\epsilon E)$ is pseudoeffective for some small $\epsilon >0$.
But Proposition \ref{prop:num dom} says that $\eta$ does not dominate $Y$ numerically.
This gives a contradiction.
\end{proof}
\subsection{Locally ample subvarieties cannot be contracted}
In this subsection, we show that, as a consequence of Theorem \ref{thm: 3}, a locally ample subvariety cannot be contracted.
\begin{theorem}\label{thm: C}
Let $X$ be a projective variety and let $Y$ be a locally ample subvariety of $X$.
Suppose $f:X\rightarrow Z$ is a surjective morphism from $X$ to a projective variety $Z$.
Then if $\dim f(Y)<\dim Y$, then $f|_Y: Y\rightarrow Z$ is surjective, i.e. $f(Y)=Z$.
\end{theorem}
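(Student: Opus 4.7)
The plan is to apply Theorem \ref{thm:num dim} to the pullback of an ample class from $Z$. Without loss of generality I would assume $f$ is surjective: if not, replace $Z$ by the closed irreducible image $f(X)$, which does not affect the hypothesis and only changes the conclusion from $f(Y)=Z$ to $f(Y)=f(X)$. Fix an ample integral Cartier divisor $A$ on $Z$ and set $\eta := f^{*}A \in \N^{1}(X)_{\mathbf{R}}$; this class is nef, and in particular pseudoeffective.

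The heart of the argument is the identification
\[
\kappa_{\sigma}(\eta) = \dim f(X), \qquad \kappa_{\sigma}(\eta|_{Y}) = \dim f(Y).
\]
To obtain the first, I would pull back along a resolution of singularities $\mu: X' \to X$; proposition \ref{prop:basic}(1) gives $\kappa_{\sigma}(\eta) = \kappa_{\sigma}(\mu^{*}\eta)$, and on the smooth variety $X'$ the class $\mu^{*}\eta = (f \circ \mu)^{*}A$ is semiample, so its $\sigma$-dimension coincides with the classical numerical Iitaka dimension, which is the dimension of the image $(f\circ\mu)(X') = f(X)$. Applying the same reduction to the restriction $f|_{Y}: Y \to f(Y)$ (after resolving $Y$) yields the second identity.

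With these in hand, the hypothesis $\dim f(Y) < \dim Y$ translates to $\kappa_{\sigma}(\eta|_{Y}) < \dim Y$, so $\eta|_{Y}$ is not big by proposition \ref{prop:basic}(3). Theorem \ref{thm:num dim} now applies and yields
\[
\dim f(X) \;=\; \kappa_{\sigma}(\eta) \;\leq\; \kappa_{\sigma}(\eta|_{Y}) \;=\; \dim f(Y).
\]
Since $X$ is projective, $f$ is proper, so $f(Y)$ is an irreducible closed subvariety of the irreducible $f(X)$; combined with $f(Y) \subseteq f(X)$ the dimension inequality forces $f(Y) = f(X) = Z$, which is the desired conclusion.

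The main technical obstacle I foresee is the identification $\kappa_{\sigma}(f^{*}A) = \dim f(X)$ when $X$ is singular, since the standard equality of $\kappa_{\sigma}$ with the numerical Iitaka dimension of a nef class is normally stated on a smooth variety. This is exactly where the pullback invariance under surjective morphisms (proposition \ref{prop:basic}(1)) earns its keep, letting one reduce to the smooth case via resolution; once this is granted, the remainder of the proof is a direct application of Theorem \ref{thm:num dim}.
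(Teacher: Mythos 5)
Your proof is correct and follows essentially the same route as the paper: apply Theorem \ref{thm:num dim} to $f^{*}A$ for $A$ ample on $Z$, after identifying $\kappa_{\sigma}(f^{*}A)=\dim f(X)$ and $\kappa_{\sigma}(f^{*}A|_{Y})=\dim f(Y)$. The only cosmetic difference is that your detour through resolutions of singularities and semiampleness is unnecessary---proposition \ref{prop:basic}(1) applied to the surjective morphisms $X\to f(X)$ and $Y\to f(Y)$, combined with the bigness of the restrictions of $A$ to the images (proposition \ref{prop:basic}(3)), yields both identities directly---while your explicit reduction to the case $f(X)=Z$ addresses a point the paper's proof leaves implicit.
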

\begin{proof}
Let $A$ be an ample divisor on $Z$.
Then
$\dim f(Y)=\kappa_{\sigma}(A|_{f(Y)})=\kappa_{\sigma}(f^{*}(A)|_Y)<\dim Y$.
Note that $f^{*}(A)|_Y$ is not big.
By Theorem \ref{thm: 3},
\[
\kappa_{\sigma}(f^{*}(A))\leq \kappa_{\sigma}(f^{*}A|_Y).
\]
But $\kappa_{\sigma}(f^{*}(A))=\dim Z$.
This forces the equality $\dim Z=\dim f(Y)$.
\end{proof}

\begin{remark}
The special case of Theorem \ref{thm: C}, where $Y$ is contracted to a point, is observed by Ottem by an elementary argument \cite[Proof of Lemma 12]{Ott16}.
\end{remark}

\appendix
\section{Proof of theorem \ref{thm:transitivity}}
First, note that we have the following commutative diagram
\[
\xymatrix{
\Bl_{\mathscr{I}_{Y}\cdot\mathscr{I}_{Z}}X
\ar[r]^{\pi_Y} \ar[d]_{\pi_Z} \ar[dr]
&
\Bl_{\mathscr{I}_{Z}}X
\ar[d]^{\pi'_Z}
\\
\Bl_{\mathscr{I}_Y}X
\ar[r]_{\pi'_Y}
&
X,
}
\]
where $\pi_Y$ and $\pi_Z$ are induced by 
blowing up the ideals $\mathscr{I}_{Y}\cdot \mathscr{O}_{\Bl_{\mathscr{I}_Z}}$ and
$\mathscr{I}_Z\cdot \mathscr{O}_{\Bl_{\mathscr{I}_Y}}$ respectively.
Let $E'_Y$ and $E'_Z$ be the exceptional divisors of $\pi'_Y$ and $\pi'_Z$.
We also let $E_Z$ be the exceptional divisor of $\pi_Z$ and let $E_Y$ be the divisor in $\Bl_{\mathscr{I}_Y\cdot\mathscr{I}_Z}X$ such that $E_Y + E_Z$ is the exceptional divisor of $\pi_Y$.
Note that $\pi_Z^*E'_Y= E_Y + E_Z$ and $\pi_Y^*E'_Z=E_Z$.
The proof of the above statements can be found in \cite[Lemma 4.11]{Lau16}.

To prove that $Z$ is locally ample in $X$,
it is the same as to show that $\mathscr{O}_{E'_Z}(E'_Z)$ is $(r_1 +r_2 -1)$-ample.
If we let $\tilde{Y}$ be the strict transform of $Y$ in $\Bl_{\mathscr{I}_Z}X$.
We know that $\mathscr{O}_{E'_Z\cap \tilde{Y}}(E'_Z)$ is $(r_2-1)$-ample.
By \cite[Proposition 4.6]{Lau16},
we know that $\pi'_Y$ has fiber dimension at most $r_1 -1$.
Therefore, $\pi_Y$ has fiber dimension at most $r_1 -1$ as well.
Let $H$ be an ample divisor on $\Bl_{\mathscr{I}_Z}X$.
By \cite[Lemma 4.9]{Lau16},
it suffices to show that 
for any $l\geq 0$,
\[
H^{i}(E_Z,\mathscr{O}_{E_Z}(mE_Z)\otimes \pi_Y^*\mathscr{O}_{\Bl_{\mathscr{I}_Z}X}(-lH))=0
\]
for $i>r_1 +r_2 -1$ and $m\gg 0$.
Fix $l\in\mathbb{Z}_{\geq 0}$.

\begin{claim}
$(E_Z-\delta E_Y)|_{E_Z \cap E_Y}$ is $(r_2-1)$-ample for $0<\delta \ll 1$.
\end{claim}
\begin{proof}[Proof of claim]
Since $-E_Y$ is $\pi_Y$-ample,
$(\pi_{Y}^*E'_Z-\delta E_Y)|_{E_Z \cap E_Y}
=(E_Z-\delta E_Y)|_{E_Z \cap E_Y}$ is $(r_2 -1)$-ample for $0<\delta \ll 1$, by \cite[Proposition 2.8]{Lau16}.
\end{proof}

\begin{claim}
$(E_Y+E_Z-\epsilon E_Z)|_{E_Z}$ is $(r_1 -1)$-ample for $0<\epsilon \ll 1$.
\end{claim}
\begin{proof}[Proof of claim]
Note that
$\pi_Z$ restricts to a morphism $E_Z\rightarrow E'_Y$,
$(\pi_Z^*E'_Y - \epsilon E_Z)|_{E_Z}
=(E_Y+E_Z-\epsilon E_Z)|_{E_Z}$
is $(r_1-1)$-ample for $0<\epsilon\ll 1$ since $-E_Z$ is $\pi_Z$-ample, by \cite[Proposition 2.8]{Lau16}.
\end{proof}

By the above claims, for sufficiently large integer $k$,
$\mathscr{O}_{E_Z\cap E_Y}(kE_Z - E_Y)$ is $(r_2-1)$-ample and $\mathscr{O}_{E_Z}((k+1)E_Y+kE_Z)$ is $(r_1-1)$-ample.
Fix such $k$.

Given $m_1,m_2\in \mathbb{Z}$, write 
\[
m_1 E_Y+m_2 E_Z =\lambda_1(kE_Z - E_Y) +\lambda_2(kE_Z +(k+1)E_Y) + j_1 E_Y +j_2 E_Z,
\]
where $\lambda_2=\lfloor\frac{m_1+\lfloor  \frac{m_2}{k}\rfloor}{k+2}\rfloor$;
$\lambda_1= \lfloor\frac{m_2}{k}\rfloor-\lambda_2$;
$j_1=((m_1+\lfloor\frac{m_2}{k}\rfloor)\bmod (k+2))$
and
$j_2= (m_2\bmod k)$.
Note that $0\leq j_1<k+2$ and $0\leq j_2<k$.
The precise formulae for $\lambda_1$ and $\lambda_2$ are not very important.
The plan is to choose a big $m_2$, then let $m_1$ increases. 
As $m_1$ grows, $\lambda_1$ decreases and $\lambda_2$ increases.
We then use the positivity of $(r_2-1)$-ampleness of $\mathscr{O}_{E_Z\cap E_Y}(kE_Z- E_Y)$ and $(r_1-1)$-ampleness of $\mathscr{O}_{E_Z}(kE_Z + (k+1)E_Y)$ to prove the required vanishing statement.

Since $\mathscr{O}_{E_Z}(kE_Z +(k+1)E_Y)$ is $(r_1-1)$-ample, we may find $\Lambda_2$ such that 
\begin{equation}\label{eq: a}
H^{i}(E_Z,\mathscr{O}_{E_Z}(\lambda_2(kE_2 +(k+1)E_Y) + j_1 E_Y +j_2 E_Z)\otimes\pi_Y^*(\mathscr{O}_{\Bl_{\mathscr{I}_{Z}}X}(-lH)))=0
\end{equation}
for $i>r_1 -1$, $\lambda_2\geq \Lambda_2$, $0\leq j_1<k+2$ and $0\leq j_2 <k$.

Applying theorem \cite[Theorem 3.9]{Lau16} to the scheme $E_Z\cap E_Y$,
there is an $\Lambda'_2$ such that 
\begin{align*}
&H^{i}(E_Z\cap E_Y, \mathscr{O}_{E_Z\cap E_Y}(\lambda_1(kE_Z - E_Y)\\
& +\lambda_2(kE_2 +(k+1)E_Y) + j_1 E_Y +j_2 E_Z)\otimes \pi^{*}_Y\mathscr{O}_{\Bl_{\mathscr{I}_Z}}(-lH))\\
&=0
\end{align*}
for $i>(r_2 -1) + (r_1 -1)$, $\lambda_1\geq 0$, $\lambda_2 \geq \Lambda'_2$, $0\leq j_1<k+2$ and $0\leq j_2<k$.
This implies
\begin{multline}\label{eq: b}
H^{i}(E_Z,\mathscr{O}_{E_Z}(m_2 E_Z + m_1 E_Y)\otimes\pi_Y^*(\mathscr{O}_{\Bl_{\mathscr{I}_{Z}}X}(-lH)))\\
\cong
H^{i}(E_Z,\mathscr{O}_{E_Z}(m_2 E_Z + (m_1 +1)E_Y)\otimes\pi_Y^*(\mathscr{O}_{\Bl_{\mathscr{I}_{Z}}X}(-lH)))
\end{multline}
for $i>r_1 + r_2 -1$, $0<m_1+1< (k+1)\lfloor\frac{m_2}{k}\rfloor + k+2$ and $\lfloor\frac{m_1 +1+\lfloor  \frac{m_2}{k}\rfloor}{k+2}\rfloor\geq \Lambda'_2$.

Choose some big $M_2$ such that $\lfloor\frac{\lfloor  \frac{M_2}{k}\rfloor}{k+2}\rfloor\geq \max\{\Lambda_2,\Lambda'_2\}$.
Applying (\ref{eq: b}) repeatedly, we have for $m_2>M_2$, 
\begin{multline}
H^{i}(E_Z,\mathscr{O}_{E_Z}(m_2 E_Z)\otimes\pi_Y^*(\mathscr{O}_{\Bl_{\mathscr{I}_{Z}}X}(-lH)))\\
\cong
H^{i}(E_Z,\mathscr{O}_{E_Z}(m_2 E_Z + (k+1)\lfloor \frac{m_2}{k}\rfloor E_Y)\otimes\pi_Y^*(\mathscr{O}_{\Bl_{\mathscr{I}_{Z}}X}(-lH)))
\end{multline}
for $i>r_1 + r_2 -1$.
The above cohomology group can be rewritten as
\[
H^{i}(E_Z,\mathscr{O}_{E_Z}(\lfloor\frac{m_2}{k}\rfloor(kE_Z +(k+1)E_Y) +(m_2 -k\lfloor\frac{m_2}{k}\rfloor) E_Z)\otimes\pi_Y^*(\mathscr{O}_{\Bl_{\mathscr{I}_{Z}}X}(-lH))),
\]
which is $0$ by (\ref{eq: a}).
This completes the proof.

\begin{bibdiv}
\begin{biblist}
\bib{Bac86}{article}{
   author={Backelin, J{\"o}rgen},
   title={On the rates of growth of the homologies of Veronese subrings},
   conference={
      title={Algebra, algebraic topology and their interactions (Stockholm,
      1983)},
   },
   book={
      series={Lecture Notes in Math.},
      volume={1183},
      publisher={Springer, Berlin},
   },
   date={1986},
   pages={79--100},
}
\bib{BDPP}{article}{
   author={Boucksom, S{\'e}bastien},
   author={Demailly, Jean-Pierre},
   author={P{\u{a}}un, Mihai},
   author={Peternell, Thomas},
   title={The pseudo-effective cone of a compact K\"ahler manifold and
   varieties of negative Kodaira dimension},
   journal={J. Algebraic Geom.},
   volume={22},
   date={2013},
   number={2},
   pages={201--248},
}
\bib{DPS}{article}{
   author={Demailly, Jean-Pierre},
   author={Peternell, Thomas},
   author={Schneider, Michael},
   title={Holomorphic line bundles with partially vanishing cohomology},
   conference={
      title={Proceedings of the Hirzebruch 65 Conference on Algebraic
      Geometry },
      address={Ramat Gan},
      date={1993},
   },
   book={
      series={Israel Math. Conf. Proc.},
      volume={9},
      publisher={Bar-Ilan Univ., Ramat Gan},
   },
   date={1996},
   pages={165--198},
}
	
\bib{Eck16}{article}{
    author={Eckl, Thomas},
    title={Numerical analogues of the Kodaira dimension and the Abundance Conjecture},
    journal={Manuscripta Mathematica},
    volume={150},
    number={3},
    pages={337--356},
    year={2016}
}
\bib{FL83}{article}{
   author={Fulton, William},
   author={Lazarsfeld, Robert},
   title={Positive polynomials for ample vector bundles},
   journal={Ann. of Math. (2)},
   volume={118},
   date={1983},
   number={1},
   pages={35--60},
}
\bib{FL14}{article}{
    author={Fulger, Mihai},
    author={Lehmann, Brian},
    title={Positive cones of dual cycle classes},
   journal={Algebraic Geom.},
   volume={4},
   number={1},
   pages={1--28},
   date={2017}
}
\bib{GK15}{article}{
	author={Greb, Daniel},
	author={K\"uronya, Alex},
	title={Partial positivity: geometry and cohomology of $q$-ample line bundles},
	journal={London Math. Soc. Lecture Note Series},
	volume={417},
	pages={207-239},
	year={2015}
}
\bib{Har70}{book}{
   author={Hartshorne, Robin},
   title={Ample subvarieties of algebraic varieties},
   series={Lecture Notes in Mathematics, Vol. 156},
   note={Notes written in collaboration with C. Musili},
   publisher={Springer-Verlag, Berlin-New York},
   date={1970},
   pages={xiv+256},
}
\bib{Har77}{book}{
    author={Hartshorne, Robin},
    title={Algebraic Geometry},
    note={Graduate Texts in Mathematics, No. 52},
   publisher={Springer-Verlag, New York-Heidelberg},
   date={1977},
   pages={xvi+496},
}
\bib{Har80}{article}{
    author={Hartshorne, Robin},
    title={Stable reflexive sheaves},
    Journal={Math. Annalen},
    Volume = {254},
    Pages = {121--176},
    Year = {1980}
}
\bib{Lau16}{article}{
    author={Lau, Chung-Ching},
    title={Fujita vanishing theorems for q-ample divisors and applications on subvarieties with nef normal bundle},
    date={2016}
}
\bib{Leh13}{article}{
    author={Lehmann, Brian},
    title={Comparing numerical dimensions},
    journal={Algebra Number Theory},
    volume={7},
    number={5},
    pages={1065--1100},
    date={2013}
}
\bib{Nak04}{book}{
    author={Nakayama, Noboru},
    Title = {Zariski-decomposition and abundance},
    Pages = {xiii + 277},
    Year = {2004},
    Publisher = {Tokyo: Mathematical Society of Japan}
}
\bib{Ott12}{article}{
   author={Ottem, John Christian},
   title={Ample subvarieties and $q$-ample divisors},
   journal={Adv. Math.},
   volume={229},
   date={2012},
   number={5},
   pages={2868--2887},
}
\bib{Ott16}{article}{
    author={Ottem, John Christian},
   title={On subvarieties with ample normal bundle},
   journal={J. Eur. Math. Soc. (JEMS)},
   volume={18},
   date={2016},
   number={11},
   pages={2459-2468},
}
\bib{Pet12}{article}{
    author={Peternell, Thomas},
    title={Compact subvarieties with ample normal bundles, algebraicity, and cones of cycles},
    journal={Michigan Math. J.},
    volume={61},
    number={4},
    year={2012},
    pages={875--889}

}
\bib{Som78}{article}{
	author={Sommese, Andrew J.},
	title={Submanifolds of Abelian varieties},
	journal={Math. Annalen},
	volume={233},
	pages={229-256},
	year={1978}

}
\bib{Tot13}{article}{
   author={Totaro, Burt},
   title={Line bundles with partially vanishing cohomology},
   journal={J. Eur. Math. Soc. (JEMS)},
   volume={15},
   date={2013},
   number={3},
   pages={731--754},
}
\end{biblist}
\end{bibdiv}
\end{document}